\newtheorem{thm}{Theorem}[section]
\newtheorem{lemma}[thm]{Lemma}
\newtheorem{thmx}{Theorem}
\theoremstyle{definition}
\newtheorem{remark}[thm]{Remark}
\newcommand\ddd{\mathrm{d}}
\newcommand\supp{\mathrm{supp}}
\newcommand\holder{Hölder's~}
\newcommand\bR{\mathbb{R}}
\newcommand\bC{\mathbb{C}}
\newcommand{\eps}{\varepsilon}
\def \l {\left}
\def \r {\right}
\newcommand\nnfootnote[1]{   
\begin{NoHyper}
\renewcommand\thefootnote{}\footnote{#1}
\addtocounter{footnote}{-1}
\end{NoHyper}}
\title{Extremizers for the Strichartz Inequality for a Fourth-Order Schrödinger Equation}
\author{Boning Di and Ryan Frier}
\date{ }
\begin{document}

\maketitle
\nnfootnote{{2020 \emph{Mathematics Subject Classification}: Primary 42B10; Secondary 35B38, 35Q41.}}
\nnfootnote{\emph{Key words and phases}: Sharp Fourier restriction theory, extremizers, fourth-order Schr\"odinger equation.}

\begin{abstract}
    In this paper, we consider the Strichartz inequality for a fourth-order Schrödinger equation on $\bR^{2+1}$.  We show that extremizers exist using a linear profile decomposition which follows from the endpoint version decomposition and the stationary phase method. Based on the existence of extremizers, we investigate the associated Euler-Lagrange equation to show that the extremizers have exponential decay and consequently must be analytic.
\end{abstract}

\section{Introduction}
In this paper, we investigate the Strichartz inequality for the fourth-order Schrödinger equation as follows:
\begin{equation}\label{E:4-Strichartz}
\l\|[e^{it|\nabla|^{4}}]u_0\r\|_{L_{t,x}^6(\mathbb{R}^3)}\leq \mathbf{S} \|u_0\|_{L_x^2(\bR^2)},
\end{equation}
where
\[\mathbf{S}:=\sup\l\{\l\|[e^{it|\nabla|^4}]u\r\|_{L_{t,x}^{6}(\bR^{3})}: \|u\|_{L_x^2(\bR^2)}=1\r\}\]
is the \textit{sharp constant} and
\[[e^{it|\nabla|^4}]u(x)=e^{it\Delta^2}u(x):=\mathscr{F}^{-1}e^{it|\xi|^4}\mathscr{F}u(x), \quad  \mathscr{F}u(\xi):=\int_{\bR^2} e^{-ix\xi}u(x)\ddd x,\]
and $x \in \bR^2$.  Here $x\xi:=x_1\xi_1+x_2\xi_2$ for vectors $x=(x_1,x_2)$ and $\xi=(\xi_1,\xi_2)$ in $\bR^2$. This Strichartz-type estimate \eqref{E:4-Strichartz} can also be seen as some \textit{Fourier extension estimate}, since the space-time Fourier transform of $[e^{it\Delta^2}]u$ is supported on the surface $(\xi,|\xi|^4)$ in $\bR^{3}$.  To see a proof of \eqref{E:4-Strichartz}, see Kenig, Ponce, and Vega's work \cite[Theorem 3.1]{KPV1991}.

Our first result states that there exists a function that makes inequality \eqref{E:4-Strichartz} an equality. Such functions will be called \textit{extremizers} for $\mathbf{S}$. We do this by constructing a linear profile decomposition without a frequency translation parameter, which will be an adaptation of the proof of \cite[Theorem 2.4]{HS2012} albeit with a small twist since our case is $d=2$. Then following some standard arguments which can be found in Han's work \cite{Han2015} as well as Hundertmark and Shao's work \cite{HS2012}, this linear profile decomposition will directly imply the desired existence of extremizers for inequality \eqref{E:4-Strichartz}.

To show that extremizers are analytic, we start by showing that $e^{\mu|\xi|^4}\widehat{f} \in L^2(\bR^2)$ for some $\mu>0$.  Through the dominated convergence theorem, this implies that $f$ is analytic.  This proof closely follows that of Brocchi, Oliveira, and Quilodrán in \cite{BOQ2020}, as well as Jiang and Shao in \cite{JS2016}.

In summary, our main result is the following theorem:
\begin{thm} \label{T:main}
	There exist extremizers for the sharp constant $\mathbf{S}$; furthermore, if $f_0$ is an extremizer, then $f_0$ can be extended to an entire function on $\bC^2$.
\end{thm}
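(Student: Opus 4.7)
For existence, I take a maximizing sequence $\{f_n\}\subset L^2(\bR^2)$ with $\|f_n\|_{L^2}=1$ and $\|[e^{it|\nabla|^4}]f_n\|_{L_{t,x}^6}\to\mathbf{M}_{2,4}$, and apply the linear profile decomposition outlined in the introduction. Since \eqref{E:4-Strichartz} has no frequency-translation symmetry (the phase $|\xi|^4$ is not translation-invariant), the only parameters that need to be extracted are scales $h_n^j$ and space-time translations $(x_n^j,t_n^j)$; this is the ``small twist'' mentioned above and it matches the two-dimensional setting. The profile decomposition follows from a refined Strichartz inequality of the shape
\[
\bigl\|[e^{it|\nabla|^4}]u_0\bigr\|_{L_{t,x}^{6}}\lesssim \|u_0\|_{L^{2}}^{\theta}\,\sup_{Q}\,|Q|^{-\alpha}\|\chi_Q\widehat{u_0}\|_{L^{2}}^{1-\theta}
\]
over dyadic cubes $Q\subset\bR^2$, which I would prove by a Whitney decomposition of frequency pairs and the bilinear estimate for the surface $\tau=|\xi|^4$ supplied by the stationary-phase method. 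Writing $f_n=\sum_{j\le J}\phi_n^j+r_n^J$, asymptotic $L^2$-orthogonality yields $\sum_j\|\phi^j\|_{L^2}^2\le 1$, while asymptotic decoupling of the Strichartz norm gives $\mathbf{M}_{2,4}^6\le \sum_j\mathbf{M}_{2,4}^6\|\phi^j\|_{L^2}^6$. Setting $a_j:=\|\phi^j\|_{L^2}^2$, the chain $\sum_j a_j^3\le(\max_j a_j)\sum_j a_j^2\le(\max_j a_j)(\sum_j a_j)^2\le \max_j a_j\le 1$ is saturated only when a single profile carries unit $L^2$-mass; pulled back by its symmetry parameters, that profile is an extremizer $f_0$.

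For analyticity, I follow the Fourier-side bootstrap of \cite{BOQ2020,JS2016}. Varying $\|Tf\|_{L^6}^6/\|f\|_{L^2}^2$ at $f_0$, with $T:=[e^{it|\nabla|^4}]$, gives the Euler--Lagrange equation $T^{\ast}(|Tf_0|^{4}Tf_0)=\lambda f_0$. Since $|Tf_0|^{4}Tf_0=(Tf_0)^3\overline{Tf_0}^{\,2}$, a Plancherel computation rewrites this as
\[
\lambda\,\widehat{f_0}(\xi)=c\int \widehat{f_0}(\eta_1)\widehat{f_0}(\eta_2)\widehat{f_0}(\eta_3)\,\overline{\widehat{f_0}(\eta_4)}\,\overline{\widehat{f_0}(\eta_5)}\,\delta(\Sigma)\,\delta(H)\,d\eta,
\]
where $\Sigma:=\eta_1+\eta_2+\eta_3-\eta_4-\eta_5-\xi$ and $H:=|\eta_1|^4+|\eta_2|^4+|\eta_3|^4-|\eta_4|^4-|\eta_5|^4-|\xi|^4$. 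The right-hand side is a pairing of five extensions and is therefore controlled by \eqref{E:4-Strichartz}. To upgrade to exponential decay I would work with the truncated weight $w_{\mu,R}(\xi):=\exp\bigl(\mu\min(|\xi|^4,R)\bigr)$ and use the energy identity $|\xi|^4=|\eta_1|^4+|\eta_2|^4+|\eta_3|^4-|\eta_4|^4-|\eta_5|^4$ valid on $\supp\delta(H)$ to distribute the weight among the five factors; inserting this into the Euler--Lagrange identity and bounding the weighted quintilinear form by Strichartz yields an inequality of the shape
\[
\|w_{\mu,R}\widehat{f_0}\|_{L^2}\le C\mu\,\|w_{\mu,R}\widehat{f_0}\|_{L^2}+C_{\mu},
\]
in which, for $\mu$ sufficiently small, the first right-hand term is absorbed and, letting $R\to\infty$, one concludes $e^{\mu|\xi|^4}\widehat{f_0}\in L^2(\bR^2)$. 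The Fourier inversion integral $f_0(z)=(2\pi)^{-2}\int_{\bR^2}e^{iz\cdot\xi}\widehat{f_0}(\xi)\,d\xi$ then converges absolutely and uniformly on compact subsets of $\bC^2$ because $|e^{iz\cdot\xi}|\le e^{|\mathrm{Im}\,z||\xi|}$ is eventually dominated by $e^{\mu|\xi|^4/2}$; differentiation under the integral promotes $f_0$ to an entire function on $\bC^2$.

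The main obstacle is the bootstrap step. Routing $e^{\mu|\xi|^4}$ through $\delta(H)$ forces \emph{negative} exponential weights onto the complex-conjugate factors $\overline{\widehat{f_0}(\eta_k)}$, $k=4,5$, which do not a priori match the positive weights on the unconjugated factors. A careful case analysis---typically on which of the $|\eta_j|$ is the largest, so that the corresponding factor can absorb the full $e^{\mu|\xi|^4}$ weight---combined with bilinear/Strichartz control of the remaining five-linear form, is needed to keep the coefficient in front of $\|w_{\mu,R}\widehat{f_0}\|_{L^2}$ linear in $\mu$ while ensuring that $C_\mu$ is uniform in $R$. An initial step showing that $\widehat{f_0}$ already possesses polynomial decay of all orders may be required before the exponential bootstrap closes.
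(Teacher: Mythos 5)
Your overall architecture (profile decomposition for existence; Euler--Lagrange equation plus a weighted Fourier-side bootstrap and Paley--Wiener/inversion for analyticity) is the same as the paper's, but both halves have a genuine gap at exactly the point where the real work lies. For existence, you assert that no frequency-translation parameter is needed ``since the phase $|\xi|^4$ is not translation-invariant.'' The absence of an exact symmetry does not by itself remove the parameter: the endpoint decomposition of Jiang--Shao--Stovall (Theorem \ref{T:JSS2017}) for this very equation does carry frequency centers $\xi_n^j$, and your proposed refined Strichartz inequality over dyadic cubes would reintroduce cube centers in the same way. What is actually needed -- and what constitutes the main content of the paper's Section \ref{S:existence of extremizers} -- is the vanishing estimate \eqref{E:LPD-1}: profiles with $|h_n^j\xi_n^j|\to\infty$ have asymptotically zero $L^6_{t,x}$ norm, so they can be absorbed into the remainder. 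Proving this requires expanding the phase $|\xi+\xi_n|^4$, rescaling, and running a stationary-phase/dominated-convergence argument, together with the linear change of variables $A_0$ that reduces the limiting oscillatory integral to the classical Schr\"odinger Strichartz estimate (this is where $d=2$ differs from $d=1$, because $|\xi\xi_n|^2\neq|\xi|^2|\xi_n|^2$). Your sketch contains no substitute for this step, so the decomposition you invoke is unproven as stated.

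For analyticity, the inequality you aim for, $\|w_{\mu,R}\widehat{f_0}\|_{L^2}\le C\mu\,\|w_{\mu,R}\widehat{f_0}\|_{L^2}+C_\mu$ with absorption for small $\mu$, is not what the quintic Euler--Lagrange term produces, and the obstacle you flag (negative weights on the conjugated factors) is not the real one: after taking absolute values one simply uses $F(\tilde\eta_1)\le\sum_{k=2}^6F(\tilde\eta_k)$ on the support of $\delta(b)$, as in \eqref{E:M_F increasing}. The genuine difficulties are two. First, the coefficient of the linear term cannot be made small by shrinking $\mu$: with all other factors estimated crudely by $\|f\|_{L^2}=1$ the coefficient is an absolute multiple of the Strichartz constant, so one needs a quantitative gain from frequency separation -- the paper's bilinear estimate Lemma \ref{bilinear} with its $N^{-5/6}$ decay, fed into Lemma \ref{multilinear} -- together with a three-scale splitting at $|\xi|<s$, $s\le|\xi|\le s^2$, $|\xi|>s^2$ and the choice $\mu=s^{-8}$, which is what yields the $o_1(1)$ in Lemma \ref{l:Poly}. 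Second, terms with two or more high-frequency factors unavoidably give powers $\|e^{F}\widehat f_>\|_{L^2}^k$ with $2\le k\le 5$ and $O(1)$ constants; since the weighted norm is not a priori small (only finite for fixed regularization), these cannot be ``absorbed'' -- the paper closes the loop with the continuity/connectivity argument in $\eps$ (the two roots $x_0<x_1$ of $G(x)=M/2$, the fact that $H_s(1)$ is small for $s$ large, and Fatou as $\eps\to0$) in the proof of Theorem \ref{exp f L2}. Your truncation parameter $R$ could in principle play the role of $\eps$, but your proposal contains neither the bilinear gain nor the polynomial-plus-connectivity bootstrap, and the suggested fallback (first proving polynomial decay of all orders) is not needed in the paper's scheme and would not by itself fix the absorption issue. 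The final step (from $e^{\mu|\xi|^4}\widehat{f_0}\in L^2$ to entirety via $e^{a|\xi|}\widehat{f_0}\in L^1$ and differentiation under the integral, or Paley--Wiener) does match the paper.
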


The history of this problem is not as rich as more common topics (such as the standard Schrödinger equation or the Airy equation), but it has been studied nevertheless. Inequality \eqref{E:4-Strichartz} can be understood as an investigation of the solution to
\begin{equation}
    \begin{split}
        \label{E:4-Schrod}
        iu_t-\Delta^2u&=0; \\
        u(x, 0)&=u_0(x) \in L^2(\bR^d).
    \end{split}
\end{equation}
A more general version of this initial value problem is
\begin{equation*}
    \begin{split}
        \label{E:Gen-Schrod}
        iu_t-\left|\nabla \right|^\alpha u&=0 \\
        u(x, 0)&=u_0(x) \in L^2(\bR^d),
    \end{split}
\end{equation*}
where $\alpha \geq 2$.  The topic is very well known, and is still widely discussed when $\alpha=2$.  Foschi found functional equations related to the Strichartz inequality in \cite{Foschi2007}, and used these to find precise values for sharp constants when $d=1, 2$.  Similarly, Foschi also used these functional equations to show that extremizers are Gaussians. In \cite{HZ2006} Hundertmark and Zharnitsky found a new representation of the Strichartz inequality:
\begin{equation*}
    \begin{split}
        \int_\bR\int_\bR\left|e^{it\Delta }f(x) \right|^6 \ddd x\ddd t&= \frac{1}{2\sqrt{3}} \langle f\otimes f \otimes f, P_1(f\otimes f \otimes f)\rangle_{L^2(\bR^3)}; \\
        \int_\bR\int_{\bR^2}\left|e^{it\Delta }f(x) \right|^4 \ddd x\ddd t&= \frac{1}{4} \langle f\otimes f, P_2(f\otimes f) \rangle_{L^2(\bR^4)},
    \end{split}
\end{equation*}
where $P_i$ are orthogonal projection operators and $f\otimes g(x_1, x_2)=f(x_1)g(x_2)$ is the standard tensor product.  Using this, they were also able to obtain that extremizers were Gaussians. In \cite{JS2016} Jiang and Shao showed that when $(\alpha,d)=(2,2)$, then extremizers must be analytic, and used the functional equations in Foschi's paper \cite{Foschi2007} to give an alternative proof to show that extremizers are Guassians. Frier and Shao showed that extremizers must be Gaussians for the case when $(\alpha,d)=(2,1)$ in \cite{FS2022DPDE} as well.  Then in \cite{BOQ2020}, Brocchi, Oliveira, and Quilodrán investigated more generally the Fourier extension operator $\int_{\bR}e^{i(xy+|y|^{\alpha}t)}|y|^{(\alpha-2)/6}f(y) \ddd y$ with $\alpha>1$ using some Br\'ezis-Lieb type lemma and geometric comparison principle.  As one can see, this corresponds to a more general $\alpha$ in dimension $d=1$. This general $(\alpha,1)$ case was also studied by Di and Yan in \cite{DY2023} using linear profile decomposition arguments. Furthermore, the existence of extremizers for one dimensional non-endpoint $\alpha$-Strichartz estimates is also deduced in \cite{DY2023}. Later, Di and Yan \cite{DY2024} have also established a precompactness result for high dimensional $(\alpha,d)$ cases with $\alpha\geq2$ by using Tao's bilinear restriction estimate and Lieb's missing mass method, as well as some multi-variable analysis such as oscillatory integral estimates. For the case $d=2$, based on the previous conclusions in \cite{OQ2018}, this precompactness result in \cite{DY2024} also implies some consequence on the existence of extremals for the endpoint $\alpha$-Strichartz inequalities. Here, for our main Theorem \ref{T:main}, we remark that
\begin{remark}
    Note that the recent work of \cite[Proposition 3.3]{DLY2024} has shown the endpoint profile decomposition for general index $\alpha$ and general dimension $d$. Following the outline of this paper, one should get similar conclusions for the general $(\alpha,d)$ case for $\alpha \in \mathbbm{N} \backslash \{1\}$.
\end{remark}

For the linear profile decomposition, there are abundant consequences for different kinds of equations and purposes, see, for instance, \cite{BG1999,BV2007,CK2007,Han2015,JPS2010,JSS2017,Keraani2001,MV1998,Shao2009}. We emphasize some consequences that are closely related to our paper. Recall the following family of fourth-order Schrödinger equations:
\begin{equation*}
	\begin{split}
		iu_t+\Delta^2 u-\mu \Delta u&=0, \qquad \mu \geq 0, \\
		u(0)&=u_0 \in L^2_x(\bR^d).
	\end{split}
\end{equation*}
Jiang, Pausader, and Shao \cite{JPS2010} established an associated linear profile decomposition for $d=1$ and then deduced a dichotomy result on the existence of extremizers for the endpoint Strichartz inequality. Later in \cite{JSS2017}, Jiang, Shao, and Stovall obtained some similar results for general dimensions. Recently, both of these dichotomy results were resolved by Brocchi, Oliveira, and Quilodrán in \cite{BOQ2020,OQ2018}. On the other hand, for the non-endpoint Strichartz inequality \eqref{E:4-Strichartz}, Han showed in \cite{Han2015} that extremizers exist through the use of a linear profile decomposition which does not include the frequency parameters. In section \ref{S:existence of extremizers}, we will eliminate the frequency parameters in the profile decomposition of \cite{JSS2017}, and therefore, the existence of extremizers can be obtained. Then in section \ref{S:Differentiability}, we will investigate the analytic property of the extremizers.

\section*{Acknowledgements}
The authors would like to thank Shuanglin Shao for his valuable input, as well as suggesting the problem to the authors, and thank Dunyan Yan for his valuable support and advice.  The authors would also like to thank René Quilodrán for pointing out an error in the original version of the paper, as well as providing additional sources for the problem. The first author was supported by China Postdoctoral Science Foundation [Grant Nos. GZB20230812 \& 2024M753436]. This research was completed when the first author was visiting the University of Kansas, whose hospitality is also appreciated. The second author received a scholarship from the Office of Graduate Studies at the University of Kansas to assist with this project, and would like to thank the donors who provided for the scholarship.

\section{Existence of Extremizers}\label{S:existence of extremizers}
This section is devoted to showing the existence of extremizers for $\mathbf{S}$ using profile decomposition arguments. Indeed, the existence of extremizers has been previously shown by Han in \cite{Han2015}. Han's paper is dependent on a linear profile decomposition in \cite{PS2010}. This result is shown using Littlewood-Paley square functions, as well as the Bernstein property. Here we provide a proof of the linear profile decomposition Theorem \ref{T:LPD} that is based on the result by Jiang, Shao, and Stovall in \cite{JSS2017}. Our result is shown via Sobelev embedding and the stationary phase method. Our proof can be seen below, and here we state \cite[Theorem 3.1]{JSS2017} for completeness.
\begin{thmx}[Endpoint profile decomposition \cite{JSS2017}]\label{T:JSS2017}
	Let $(u_n)$ be a bounded sequence in $L^2(\bR^2)$. Then up to sub-sequences, there exists a sequence of operators $([T_n^j])$ defined by
	\[[T_n^j]\phi(x):=[e^{-it_n^j\Delta^2}]\l[(h_n^j)^{-1} e^{i(x-x_n^j)\xi_n^j} \phi\l(\frac{x-x_n^j}{h_n^j}\r)\r]\]
	with $(h_n^j,x_n^j,\xi_n^j, t_n^j)\in\bR_{+}\times \bR^2\times \bR^2 \times \bR$ and a sequence of functions $\phi^j\in L^2(\bR^2)$ such that for every $J\geq1$, we have the profile decomposition
	\begin{equation*}
		u_n=\sum_{j=1}^{J} [T_n^j]\phi^j+\omega_n^J,
	\end{equation*}
	where $\lim_{n\to\infty}|h_n^j\xi_n^j|=\infty$ or $\xi_n^j\equiv0$, meanwhile this decomposition has the following properties: firstly the remainder term $\omega_n^J$ has vanishing Strichartz norm
	\begin{equation}\label{T:JSS2017-1}
		\lim_{J\to\infty} \lim_{n\to\infty} \l\|[D^{\frac{1}{2}}] [e^{it\Delta^2}]\omega_n^J\r\|_{L_{t,x}^{4}(\bR^3)}=0,
	\end{equation}
	where $[D^s]f:=\mathscr{F}^{-1}|\xi|^s \mathscr{F}f$; secondly for each $J\geq1$ we have the $L^2$-orthogonality
	\begin{equation}\label{T:JSS2017-2}
		\lim_{n\to\infty}\l[\|u_n\|_{L_x^2}^2-\l(\sum_{j=1}^J \|\phi^j\|_{L_x^2}^2\r)-\|\omega_n^J\|_{L_x^2}^2\r]=0;
	\end{equation}
	and moreover for each $J\geq1$ there holds the Strichartz-orthogonality
	\begin{equation}\label{T:JSS2017-3}
		\limsup_{n\to\infty} \l(\l\|\sum_{j=1}^J [D^{\frac{1}{2}}] [e^{it\Delta^2}][T_n^j]\phi^j\r\|_{L_{t,x}^4}^4 -\sum_{j=1}^J \l\|[D^{\frac{1}{2}}] [e^{it\Delta^2}][T_n^j]\phi^j\r\|_{L_{t,x}^4}^4\r)=0.
	\end{equation}
\end{thmx}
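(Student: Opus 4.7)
My plan is to prove Theorem \ref{T:JSS2017} by a Keraani-type concentration-compactness argument, adapted to the fourth-order propagator $[e^{it\Delta^2}]$ in $d=2$. The overall strategy decomposes into four steps: (i) prove a \emph{refined Strichartz inequality} controlling the endpoint norm $\|[D^{1/2}][e^{it\Delta^2}]u\|_{L^4_{t,x}}$ by a product of $\|u\|_{L^2}$ with a weaker, frequency-localized concentration quantity; (ii) extract a single non-trivial profile whenever the Strichartz norm of $u_n$ is asymptotically bounded below; (iii) iterate on the residual; (iv) verify the $L^2$- and Strichartz-orthogonality of the resulting bubbles.

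The analytic heart is the refined Strichartz inequality. Via a Whitney-type decomposition of the paraboloid-like surface $(\xi,|\xi|^4)$ and a bilinear restriction estimate tailored to the fourth-order phase, I would seek a bound of the form
\begin{equation*}
\l\|[D^{\frac{1}{2}}][e^{it\Delta^2}]u\r\|_{L^4_{t,x}(\bR^3)} \lesssim \|u\|_{L^2}^{1-\theta}\,\Big(\sup_{Q}\,\l\|P_Q [D^{\frac{1}{2}}][e^{it\Delta^2}]u\r\|_{L^\infty_{t,x}}\Big)^{\theta},
\end{equation*}
where $Q$ ranges over dyadic cubes in $\bR^2$, $P_Q$ denotes the associated Fourier projection, and $\theta\in(0,1)$. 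The $[D^{1/2}]$ weight compensates for the vanishing curvature of $\tau=|\xi|^4$ at the origin, restoring the scale invariance of the $L^4_{t,x}$ norm and opening the door to bilinear methods. If $\|[D^{1/2}][e^{it\Delta^2}]u_n\|_{L^4_{t,x}}\geq\eta>0$, this bound yields dyadic cubes $Q_n$ saturating the supremum; the scale of $Q_n$ provides $h_n^1$, its center provides $\xi_n^1$, and a Lebesgue-type selection inside the supremum produces $(x_n^1,t_n^1)$. The weak $L^2$-limit of $[T_n^1]^{-1}u_n$ along a subsequence then serves as the first profile $\phi^1$, with $\|\phi^1\|_{L^2}$ bounded below in terms of $\eta$ and $\sup_n\|u_n\|_{L^2}$.

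The iteration is standard. Setting $\omega_n^1 = u_n - [T_n^1]\phi^1$, weak convergence together with the choice of parameters gives $\|u_n\|_{L^2}^2 = \|\phi^1\|_{L^2}^2 + \|\omega_n^1\|_{L^2}^2 + o(1)$. Repeating on $\omega_n^1$ extracts $\phi^2,\phi^3,\ldots$; since each extraction consumes a definite fraction of the residual $L^2$-mass in terms of its Strichartz norm, the limiting Strichartz norm must decay to zero, yielding \eqref{T:JSS2017-1} and \eqref{T:JSS2017-2}. The Strichartz-orthogonality \eqref{T:JSS2017-3} is then a case analysis: for distinct indices $j\neq j'$, one of the parameter ratios $h_n^j/h_n^{j'}$, $(x_n^j-x_n^{j'})/h_n^j$, $(t_n^j-t_n^{j'})/(h_n^j)^4$, or $h_n^j(\xi_n^j-\xi_n^{j'})$ must diverge, and each divergence forces the mixed $L^4$ cross term to vanish after a change of variables and a stationary-phase analysis of the quartic phase.

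The dichotomy $|h_n^j\xi_n^j|\to\infty$ or $\xi_n^j\equiv0$ is the most subtle structural feature and the main obstacle I expect. Since $[e^{it\Delta^2}]$ admits no exact Galilean symmetry, a frequency shift $\xi_n^j$ with $h_n^j|\xi_n^j|=O(1)$ does not truly enrich the list of symmetries: it can be absorbed into the translation and scaling parameters modulo an $L^2$-negligible error coming from the quartic phase $(h_n^j|\xi|)^4$ remaining uniformly bounded. Normalizing away this intermediate regime leaves only the cases $\xi_n^j\equiv0$ and $|h_n^j\xi_n^j|\to\infty$. Beyond this, the hardest part of the argument is the refined Strichartz inequality at the endpoint with the $[D^{1/2}]$ weight, since the degenerate curvature of the fourth-order surface at the origin obstructs a naive bilinear argument and forces a carefully designed Whitney-type decomposition adapted to the quartic phase, precisely in the spirit of \cite{JSS2017}.
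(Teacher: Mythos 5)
You should first note what this theorem is in the paper: it is quoted verbatim from Jiang--Shao--Stovall \cite{JSS2017} and stated ``for completeness''; the paper offers no proof of it, and its own contribution begins afterwards, in passing from Theorem \ref{T:JSS2017} to the non-endpoint decomposition without frequency parameters. Your proposal is therefore measured against the full content of the cited work, and as written it is a strategy outline rather than a proof: the Keraani-type scheme you describe (refined estimate, profile extraction, iteration, orthogonality) is indeed the right family of ideas, but every load-bearing step is asserted rather than established.

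The concrete gap is the refined Strichartz inequality you say you ``would seek'', which is both unproven and, in the form you wrote, false. The left-hand side $\|[D^{\frac12}][e^{it\Delta^2}]u\|_{L^4_{t,x}(\bR^3)}$ is invariant under the $L^2$-preserving scaling $u\mapsto u_\lambda:=\lambda^{-1}u(\cdot/\lambda)$ in $d=2$, since $[D^{\frac12}][e^{it\Delta^2}]u_\lambda(x,t)=\lambda^{-3/2}\bigl([D^{\frac12}][e^{is\Delta^2}]u\bigr)(x/\lambda,t/\lambda^4)$ and $dx\,dt$ contributes $\lambda^{6/4}$. But your right-hand side scales like $\lambda^{-3\theta/2}$: the $L^\infty_{t,x}$ factor picks up $\lambda^{-3/2}$, $\|u\|_{L^2}$ is unchanged, and the family of dyadic cubes is essentially dilation-invariant, so taking $\lambda\to\infty$ along dyadic values contradicts the estimate for any $\theta>0$. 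A correct refinement must weight the localized piece by an appropriate power of $|Q|$ (or use a cap-based $X_p$-type quantity), and proving such a refinement at the endpoint with the $[D^{\frac12}]$ weight --- where the curvature of $\tau=|\xi|^4$ degenerates at the origin --- is precisely the main technical content of \cite{JSS2017}, not a routine Whitney/bilinear step one can wave at. Two further points are asserted but not proved: the normalization to the dichotomy $|h_n^j\xi_n^j|\to\infty$ or $\xi_n^j\equiv0$ is carried out by modulating the profile itself (pass to a subsequence with $h_n^j\xi_n^j\to\xi_0$ and replace $\phi^j(y)$ by $e^{iy\xi_0}\phi^j(y)$), not by absorbing the modulation ``into the translation and scaling parameters''; and the vanishing of the mixed $L^4$ cross terms in \eqref{T:JSS2017-3} under divergence of the parameter ratios, with the derivative weight present and no Galilean symmetry available, requires a genuine argument. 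So the skeleton is right, but the theorem's hard parts are left unestablished.
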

Based on this endpoint profile decomposition Theorem \ref{T:JSS2017}, we can establish the corresponding non-endpoint linear profile decomposition, which will then lead to the existence of extremizers for $\mathbf{S}$. Here, we prove the desired conclusions by adapting the arguments in \cite[Theorem 2.4]{HS2012} which deals with the one dimensional case $u\in L^2(\bR)$. While in our two dimensional case, due to the differences between $|\xi_n\xi|^2$ and $|\xi|^2|\xi_n|^2$, we need to further introduce some linear transformation $A_0$ which will be seen later in \eqref{E:LPD-2.5}.

\begin{thm}[Non-endpoint profile decomposition]\label{T:LPD}
	Let $(u_n)$ be a bounded sequence in $L^2(\bR^2)$. Then up to sub-sequences, there exists a sequence of operators $([T_n^j])$ defined by
	\[[T_n^j]\phi(x):=[e^{-it\Delta^2}]\l[(h_n^j)^{-1} \phi\l(\frac{x-x_n^j}{h_n^j}\r)\r]\]
	with $(h_n^j,x_n^j,t_n^j)\in\bR_{+}\times \bR^2 \times \bR$ and a sequence of functions $\phi^j\in L^2(\bR^2)$ such that for every $J\geq1$, we have the profile decomposition
	\begin{equation}\label{T:LPD-1}
		u_n=\sum_{j=1}^{J} [T_n^j]\phi^j+\omega_n^J,
	\end{equation}
	where the decomposition processes the following properties: firstly, the remainder term $\omega_n^J$ has vanishing Strichartz norm
	\begin{equation}\label{T:LPD-2}
		\lim_{J\to\infty} \lim_{n\to\infty} \l\|[e^{it\Delta^2}]\omega_n^J\r\|_{L_{t,x}^{6}(\bR^3)}=0;
	\end{equation}
	secondly for each $J\geq1$ we have the $L^2$-orthogonality
	\begin{equation}\label{T:LPD-3}
		\lim_{n\to\infty}\l[\|u_n\|_{L^2}^2-\l(\sum_{j=1}^J \|\phi^j\|_{L_x^2}^2\r)-\|\omega_n^J\|_{L_x^2}^2\r]=0;
	\end{equation}
	and moreover for each $J\geq1$ there holds the Strichartz-orthogonality
	\begin{equation}\label{T:LPD-4}
		\limsup_{n\to\infty} \l(\l\|\sum_{j=1}^J [e^{it\Delta^2}][T_n^j]\phi^j\r\|_{L_{t,x}^6}^6 -\sum_{j=1}^J \l\| [e^{it\Delta^2}][T_n^j]\phi^j\r\|_{L_{t,x}^6}^6\r)=0.
	\end{equation}
\end{thm}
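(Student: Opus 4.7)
The plan is to derive Theorem \ref{T:LPD} from Theorem \ref{T:JSS2017} by two operations: eliminating the frequency translation parameters $\xi_n^j$, and upgrading the remainder control from $[D^{1/2}]L^4_{t,x}$ to $L^6_{t,x}$.

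First I apply Theorem \ref{T:JSS2017} to $(u_n)$ and pass to subsequences so that, for each profile index $j$, either $\xi_n^j\equiv 0$ or $|h_n^j\xi_n^j|\to\infty$. Profiles of the first type already match the form of Theorem \ref{T:LPD} and I retain them verbatim. The main task is to show that profiles of the second type have vanishing $L^6_{t,x}$ Strichartz norm:
\[
\lim_{n\to\infty}\bigl\| e^{it\Delta^2}[\widetilde{T}_n^j]\phi^j\bigr\|_{L^6_{t,x}(\bR^3)}=0\quad\text{whenever }|h_n^j\xi_n^j|\to\infty,
\]
where $[\widetilde{T}_n^j]$ denotes the operator of Theorem \ref{T:JSS2017}. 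Using the $L^2$-scaling together with the spatial and temporal translation invariances of this norm, this reduces to proving $\|e^{it\Delta^2}[e^{ix\eta_n}\phi]\|_{L^6_{t,x}(\bR^3)}\to 0$ whenever $|\eta_n|\to\infty$ and $\phi\in L^2(\bR^2)$. For Schwartz $\phi$ I would invoke stationary phase: Taylor-expanding $|\eta_n+\eta|^4-|\eta_n|^4$ produces a linear transport term at group velocity $4|\eta_n|^2\eta_n$ plus an anisotropic Schrödinger-type quadratic term of size $|\eta_n|^2$, and the change of variables $(t,x)\mapsto(s/|\eta_n|^2,\,y-4s\eta_n)$ converts the evolution, modulo errors of order $|\eta_n|^{-1}$, into a standard Schrödinger evolution $e^{is\Delta_Q}\phi$ while the Jacobian contributes a decaying factor $|\eta_n|^{-1/3}$ in $L^6_{t,x}$. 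Finiteness of $\|e^{is\Delta_Q}\phi\|_{L^6_{s,y}(\bR^3)}$ for Schwartz $\phi$ follows by interpolating the dispersive bound $\|e^{is\Delta_Q}\phi\|_{L^\infty_y}\lesssim|s|^{-1}\|\phi\|_{L^1_y}$ with $L^2$-conservation and integrating in time; a density argument extends the conclusion to all $\phi\in L^2$.

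Having disposed of the Group-B profiles individually, I form the new remainder $\widetilde{\omega}_n^J:=\omega_n^J+\sum_{j\in B,\,j\leq J}[\widetilde{T}_n^j]\phi^j$. By the triangle inequality and the vanishing above, verifying \eqref{T:LPD-2} reduces to showing $\|e^{it\Delta^2}\omega_n^J\|_{L^6_{t,x}}\to 0$. Here the key Sobolev ingredient is the two-dimensional Gagliardo-Nirenberg inequality
\[
\|f\|_{L^6(\bR^2)}\leq C\,\bigl\|[D^{1/2}]f\bigr\|_{L^4(\bR^2)}^{2/3}\,\|f\|_{L^2(\bR^2)}^{1/3},
\]
which, combined with the $L^2$-conservation of $e^{it\Delta^2}$ and Hölder's inequality in time, yields
\[
\bigl\|e^{it\Delta^2}\omega_n^J\bigr\|_{L^6_{t,x}(\bR^3)}\leq C\,\|\omega_n^J\|_{L^2}^{1/3}\,\bigl\|[D^{1/2}]e^{it\Delta^2}\omega_n^J\bigr\|_{L^4_{t,x}(\bR^3)}^{2/3},
\]
and the right-hand side vanishes thanks to \eqref{T:JSS2017-1} and the uniform boundedness of $\|\omega_n^J\|_{L^2}$.

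Finally, the $L^2$-orthogonality \eqref{T:LPD-3} inherits from \eqref{T:JSS2017-2}: absorbing the Group-B profiles into the remainder merely transfers their squared $L^2$-masses from the profile sum into $\|\widetilde{\omega}_n^J\|_{L^2}^2$, up to cross-terms that vanish by the almost-orthogonality in \cite{JSS2017}. For the Strichartz-orthogonality \eqref{T:LPD-4}, the cross-terms in the expansion of $\|\sum_{j\in A,\,j\leq J}e^{it\Delta^2}[T_n^j]\phi^j\|_{L^6_{t,x}}^{6}$ vanish via standard change-of-variable arguments exploiting the divergence of the $(h_n^j,x_n^j,t_n^j)$ parameters between distinct Group-A profiles. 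The main obstacle is the stationary-phase/rescaling argument of the second paragraph, since showing that high-frequency profiles are invisible at the $L^6_{t,x}$ level is not automatic; it requires a careful accounting of how the fourth-order dispersion interacts with the $L^6$-scaling, and ultimately exploits that $L^6_{t,x}$ is softer than the endpoint $[D^{1/2}]L^4_{t,x}$ and thus blind to high-frequency concentration.
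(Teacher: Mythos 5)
Your overall route coincides with the paper's: apply Theorem \ref{T:JSS2017}, keep the profiles with $\xi_n^j\equiv 0$, absorb the profiles with $|h_n^j\xi_n^j|\to\infty$ into the remainder by reducing (via the scaling/translation symmetries) to the vanishing estimate $\lim_{|\xi_n|\to\infty}\|e^{it\Delta^2}[e^{i(\cdot)\xi_n}\phi]\|_{L^6_{t,x}}=0$, and control the enlarged remainder by dominating the $L^6_{t,x}$ norm by the quantity in \eqref{T:JSS2017-1}. Your substitute for the paper's space-time Sobolev embedding, namely the spatial Gagliardo--Nirenberg inequality $\|f\|_{L^6(\bR^2)}\lesssim\|[D^{1/2}]f\|_{L^4}^{2/3}\|f\|_{L^2}^{1/3}$ (which follows from $\dot W^{1/2,4}(\bR^2)\hookrightarrow \mathrm{BMO}$ and $L^2$--$\mathrm{BMO}$ interpolation) together with mass conservation, is a harmless variant; it uses the uniform bound on $\|\omega_n^J\|_{L^2}$, which is available from \eqref{T:JSS2017-2}. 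The treatment of \eqref{T:LPD-3} and \eqref{T:LPD-4} is also as in the paper (the latter is the easy, frequency-parameter-free orthogonality, cf.\ Lemma 2.7 of \cite{HS2012}).

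The one genuine soft spot is the justification of the key vanishing estimate, and it is exactly the step you flag as the "main obstacle." After the change of variables $X=x+4|\xi_n|^2\xi_n t$, $T=|\xi_n|^2 t$ you correctly extract the factor $|\xi_n|^{-1/3}$, but the leftover phase corrections $4|\xi|^2\xi\cos\theta_\xi^n/|\xi_n|+|\xi|^4/|\xi_n|^2$ enter multiplied by $T$, which ranges over all of $\bR$; on time scales $|T|\sim|\xi_n|$ they are $O(1)$, so the evolution is \emph{not} within $o(1)$ of the anisotropic flow $e^{is\Delta_Q}\phi$ in $L^6_{T,X}$, and a literal reading of "modulo errors of order $|\eta_n|^{-1}$" would fail (note also that bounding the transformed integral by undoing the change of variables and citing Strichartz is circular). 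What must be supplied is a bound on the perturbed oscillatory integral that is uniform in $n$: for $\phi$ Schwartz with compact Fourier support, non-stationary phase when $|X|\gtrsim_\phi|T|$ gives decay $(1+|X|)^{-1}$, while the Hessian lower bound $|\partial_{\xi_1}^2\varphi_n|\gtrsim|T|$ on $\supp\widehat\phi$ gives $(1+|T|)^{-1/2}$ in the complementary region; together these produce an $L^6_{T,X}$ dominating function, after which either dominated convergence (as in the paper, identifying the limit with the anisotropic Schr\"odinger evolution, which is reduced to the classical Strichartz estimate by the linear change of variables $A_0$) or, more simply, the uniform bound combined with the prefactor $|\xi_n|^{-1/3}\to 0$ yields the claim; the extension from the dense class to all $\phi\in L^2$ is legitimate because the operators $e^{it\Delta^2}e^{i(\cdot)\xi_n}$ are uniformly bounded from $L^2_x$ to $L^6_{t,x}$ by \eqref{E:4-Strichartz}. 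So your plan is the right one, but the uniform-in-$n$ oscillatory-integral control, rather than an error-term comparison with $e^{is\Delta_Q}$, is the missing mechanism, and it is precisely what the paper's stationary-phase argument provides.
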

\begin{proof}[\textbf{Proof of Theorem \ref{T:LPD}}]
	Notice that Sobolev inequalities imply the following estimate:
	\[\l\|[e^{it\Delta^2}]\phi\r\|_{L_{t,x}^6(\bR^3)}\lesssim \l\|[D_t^{\frac{1}{12}}] [D_x^{\frac{1}{6}}] [e^{it\Delta^2}] \phi \r\|_{L_{t,x}^4(\bR^3)}= \l\|[D^{\frac{1}{2}}] [e^{it\Delta^2}] \phi\r\|_{L_{t,x}^4(\bR^3)}.\]
	Therefore the vanishing norm estimate \eqref{T:LPD-2} follows from the remainder term estimate \eqref{T:JSS2017-1} in Theorem \ref{T:JSS2017}. To eliminate the frequency parameters, as shown in the proof of \cite[Theorem 2.4]{HS2012}, the key point is to deduce the following estimate:
	\begin{equation}\label{E:LPD-1}
		\lim_{|\xi_n|\to\infty}\l\|[e^{it\Delta^2}][e^{i(\cdot)\xi_n}\phi]\r\|_{L_{t,x}^{6}}=0.
	\end{equation}
	Here, for convenience, we establish the desired estimate \eqref{E:LPD-1} with the notation $\xi_n$ instead of $h_n^j\xi_n^j$. Then the highly oscillatory terms, which mean the terms $[T_n^j]\phi^j(x)$ with
	\[\lim_{n\to\infty}|h_n^j\xi_n^j|=\infty,\]
	in Theorem \ref{T:JSS2017} can be reorganized into the remainder term based on this estimate \eqref{E:LPD-1}. After that, the desired Strichartz-orthogonality \eqref{T:LPD-4} of these profiles is much easier to establish due to the lack of frequency parameters, see also \cite[Lemma 2.7]{HS2012} for further details. The other conclusions come from Theorem \ref{T:JSS2017} accordingly.

	Therefore, it remains to obtain the estimate \eqref{E:LPD-1}. We first rewrite
	\[\l|[e^{it\Delta^2}][e^{i(\cdot)\xi_n}\phi](x)\r|=\frac{1}{(2\pi)^2} \l|\int_{\bR^2} e^{ix\xi +it{\Phi}_n(\xi)} \widehat{\phi}(\xi)\ddd\xi\r|\]
	with
	\[{\Phi}_n(\xi):= |\xi|^4+4|\xi|^2\xi\xi_n +2|\xi|^2|\xi_n|^2 +4|\xi\xi_n|^2 +4|\xi_n|^2\xi_n\xi +|\xi_n|^4.\]
	Notice that by density we can assume $\phi$ to be a Schwartz function with compact Fourier support. Then by the change of variables
	\[X=x+4|\xi_n|^2\xi_n t, \quad T= |\xi_n|^2t,\]
	we know that the left hand side of \eqref{E:LPD-1} is comparable to
	\[\lim_{n\to\infty} |\xi_n|^{-\frac{1}{3}} \l\|\int_{\bR^2} e^{i X\xi-i T\l[2+4\cos^2{\theta_{\xi}^n}\r]|\xi|^2-i T\l(4\frac{|\xi|^2\xi\cos\theta_{\xi}^n}{|\xi_n|}+\frac{|\xi|^4}{|\xi_n|^2}\r)} \widehat{\phi}(\xi) \ddd\xi \r\|_{L_{T,X}^{6}},\]
	where
	\[\cos\theta_{\xi}^n:=\frac{\xi\bar{\xi}_n}{|\xi|}, \quad \bar{\xi}_n:= \frac{\xi_n}{|\xi_n|}.\]
	If we further denote
	\[\cos\theta_{\xi}:=\frac{\xi\bar{\xi}_0}{|\xi|}, \quad \bar{\xi}_0:= \lim_{n\to\infty} \bar{\xi}_n,\]
	we claim (the proof is post-posed) that
	\begin{align*}
		\lim_{n\to\infty}&\l\|\int_{\bR^2} e^{i X\xi-i T\l[2+4\cos^2\theta_{\xi}^n\r]|\xi|^2-i T\l(4\frac{|\xi|^2\xi\cos\theta_{\xi}^n}{|\xi_n|}+\frac{|\xi|^4}{|\xi_n|^2}\r)} \widehat{\phi}(\xi) \ddd\xi \r\|_{L_{T,X}^{6}}\\
		=&\l\|\int_{\bR^2} e^{i X\xi-i T(2+4\cos^2{\theta}_{\xi})|\xi|^2} \widehat{\phi}(\xi) \ddd\xi \r\|_{L_{T,X}^{6}}.
	\end{align*}
	Then, due to the condition $\lim_{n\to\infty}|\xi_n|=\infty$, our desired conclusion \eqref{E:LPD-1} can be deduced from the following estimate:
	\begin{equation}\label{E:LPD-2}
		\l\|\int_{\bR^2} e^{i X\xi-i T(2+4\cos^2{\theta_{\xi}})|\xi|^2} \widehat{\phi}(\xi) \ddd\xi \r\|_{L_{T,X}^{6}}\lesssim \|\phi\|_{L^2}.
	\end{equation}
	To establish \eqref{E:LPD-2}, we define the transformation $A_0$ on $\bR^2$ as
        \begin{equation}\label{E:LPD-2.5}
		A_0: \xi\mapsto \sqrt{2}\xi^{\bot}+\sqrt{6}\xi^{\shortparallel}, \quad \xi^{\shortparallel}:=(\xi\xi_0)\xi_0, \quad \xi^{\bot}:=\xi-\xi^{\shortparallel}.
	\end{equation}
	And the associated unitary operator $[\tilde{A}_0]$ on $L^2(\bR^2)$ is defined by
	\[[\tilde{A}_0]f(x):=|A_0|^{1/2}f(A_0x).\]
	Note that $|A_0|=2\sqrt{3}$ and $|A_0\xi|^2=(2+4\cos^2\theta_{\xi})|\xi|^2$.
	Hence a direct computation yields
	\[\l\|\int_{\bR^2} e^{i X\xi-i T(2+4\cos^2{\theta_{\xi}})|\xi|^2} \widehat{\phi}(\xi) \ddd\xi \r\|_{L_{T,X}^{6}}=|A_0|^{-1/3}\l\|[e^{it\Delta}][\tilde{A}_0]\phi\r\|_{L_{T,X}^6}.\]
	Then the classical Strichartz estimates for $[e^{it\Delta}]$ gives the desired result \eqref{E:LPD-2}. For more on this estimate, we refer to \cite{Strichartz1977}.
 
    Now, it remains to prove the aforementioned claim. Indeed, define the phase function
	\[\varphi_n(T,X,\xi):= X\xi- T\l[2+4\cos^2\theta_{\xi}^n\r]|\xi|^2- T\l(4\frac{|\xi|^2\xi\cos\theta_{\xi}^n}{|\xi_n|}+\frac{|\xi|^4}{|\xi_n|^2}\r).\]
	Since the support of $\widehat{\phi}$ is compact and $\lim_{n\to\infty}|\xi_n|=\infty$, we can choose $n$ large enough such that
	\[\l|\l(\frac{\partial}{\partial \xi_1}\r)^2 \varphi_n(T,X,\xi)\r|> |T|\l[1+2\cos^2\theta_{\xi}^n\r]\geq |T|, \quad \forall \;\xi\in\supp\widehat{\phi}.\]
	Furthermore, we can choose $n$ large enough such that the following estimate
	\[\l|\partial_{\xi}^{\beta} \varphi_n(T,X,\xi)\r| \leq |T|/2\]
	holds for arbitrary multi-index $\beta$ with $|\beta|=3$ on the support of $\widehat{\phi}$. Hence, for large $n$, the several variables' scaling principle of oscillatory integrals \cite[page 342, Proposition 5]{Stein1993} gives
	\begin{equation}\label{E:LPD-3}
		\l|\int_{\bR^2} e^{i\varphi_n(T,X,\xi)} \widehat{\phi}(\xi) \ddd\xi \r|\lesssim_{\phi} (1+|T|)^{-1/2}.
	\end{equation}
	This estimate holds uniformly for $X\in \bR^2$. Then we consider the gradient and use the localization principle for several variables' oscillatory integrals \cite[page 341, Proposition 4]{Stein1993}. It is sufficient to consider the partial derivative $\partial_{\xi_1}\varphi_n(T,X,\xi)$. Since $\supp(\widehat{\phi})$ is compact and $\lim_{n\to\infty}|\xi_n|=\infty$, we know that if $|X|\gtrsim_{\phi} |T|$ and $n$ large enough, there holds
	\[\l|\partial_{\xi_1}\varphi_n(T,X,\xi)\r|>|X|/2.\]
	Therefore, on the set $\l\{(X,T)\in\bR^3: |X|\gtrsim_{\phi} |T|\r\}$, the localization principle for several variables' oscillatory integrals \cite[page 341, Proposition 4]{Stein1993} implies
	\[\l|\int_{\bR^2} e^{i\varphi_n(T,X,\xi)} \widehat{\phi}(\xi) \ddd\xi \r|\lesssim_{\phi} (1+|X|)^{-1} \lesssim_{\phi} \l[(1+|X|)(1+|T|)\r]^{-1/2}.\]
	On the other hand, for the set $\l\{(X,T)\in\bR^3: |X|\lesssim_{\phi} |T|\r\}$, we can use \eqref{E:LPD-3} to deduce
	\[\l|\int_{\bR^2} e^{i\varphi_n(T,X,\xi)} \widehat{\phi}(\xi) \ddd\xi \r|\lesssim_{\phi} (1+|T|)^{-1/2} \lesssim_{\phi} \l[(1+|X|)(1+|T|)\r]^{-1/4}.\]
	These aforementioned two estimates give the following dominating function:
	\begin{equation*}
		F(T,X):=\l\{\begin{array}{cc}
			C_{\phi}\l[(1+|T|)(1+|X|)\r]^{-1/4}, & |X|\leq C'_{\phi} |T|;\\
			C_{\phi}\l[(1+|T|)(1+|X|)\r]^{-1/2}, & |X|\geq C'_{\phi} |T|.
			\end{array}\r.
	\end{equation*}
	It is routine to verify that $F(T,X)\in L_{T,X}^6(\bR^3)$ and then the dominated convergence theorem directly implies our claim.
\end{proof}

There are standard arguments to deduce some extremizer results by applying some linear profile decomposition consequences.  See, for instance, \cite{BS2021,Han2015,HS2012,JSS2017,Shao2009EJDE,Stovall2020} for further details. Since there is no frequency parameters in the aforementioned non-endpoint profile decomposition Theorem \ref{T:LPD}, the existence of extremizers is much easier to obtain. Therefore, we state the following Theorem \ref{T:existence of extremizers} without proof. Similar proof can be found in Han's work \cite[Section 3]{Han2015} and the details are omitted here.
\begin{thm}\label{T:existence of extremizers}
	There exists an extremizer for $\mathbf{S}$.
\end{thm}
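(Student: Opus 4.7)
The plan is to run the standard profile-decomposition argument: take a maximizing sequence and then use Theorem \ref{T:LPD} together with the symmetry invariance of the inequality to force the decomposition to concentrate into a single nonzero profile.

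First I would pick a maximizing sequence $(u_n)\subset L^2(\bR^2)$ with $\|u_n\|_{L^2}=1$ and $\lim_n\|[e^{it\Delta^2}]u_n\|_{L^6_{t,x}}=\mathbf{M}_{2,4}$. Applying Theorem \ref{T:LPD} (after passing to a subsequence), write $u_n=\sum_{j=1}^J[T_n^j]\phi^j+\omega_n^J$ with the three orthogonality/vanishing properties \eqref{T:LPD-2}–\eqref{T:LPD-4}. The key structural observation I need is that each transformation $[T_n^j]$ is a symmetry of the Strichartz inequality \eqref{E:4-Strichartz}: a direct scaling and translation computation (using $d=2$ for the $L^2$ normalization and the fact that $(t,x)\mapsto(h^{-4}t,h^{-1}(x-x_0))$ maps solutions of $iu_t-\Delta^2 u=0$ to solutions, together with time translation invariance) shows
\[
\|[T_n^j]\phi\|_{L^2}=\|\phi\|_{L^2},\qquad \bigl\|[e^{it\Delta^2}][T_n^j]\phi\bigr\|_{L^6_{t,x}}=\bigl\|[e^{it\Delta^2}]\phi\bigr\|_{L^6_{t,x}}.
\]

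Second I would combine the Strichartz-orthogonality \eqref{T:LPD-4}, the symmetry, and \eqref{E:4-Strichartz} to get
\[
\limsup_n\Bigl\|\sum_{j=1}^J[e^{it\Delta^2}][T_n^j]\phi^j\Bigr\|_{L^6_{t,x}}^6=\sum_{j=1}^J\bigl\|[e^{it\Delta^2}]\phi^j\bigr\|_{L^6_{t,x}}^6\leq \mathbf{M}_{2,4}^6\sum_{j=1}^J\|\phi^j\|_{L^2}^6.
\]
Setting $a_j=\|\phi^j\|_{L^2}^2$, the $L^2$-orthogonality \eqref{T:LPD-3} together with $\|u_n\|_{L^2}=1$ gives $\sum_j a_j\leq 1$, hence $\sum_j a_j^3\leq(\sup_j a_j)^2$. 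On the other hand, the triangle inequality in $L^6_{t,x}$ applied to $[e^{it\Delta^2}]u_n=\sum_j [e^{it\Delta^2}][T_n^j]\phi^j+[e^{it\Delta^2}]\omega_n^J$ combined with the vanishing \eqref{T:LPD-2} of the remainder as $J,n\to\infty$ yields
\[
\mathbf{M}_{2,4}\leq \lim_{J\to\infty}\limsup_n\Bigl\|\sum_{j=1}^J[e^{it\Delta^2}][T_n^j]\phi^j\Bigr\|_{L^6_{t,x}}\leq \mathbf{M}_{2,4}\bigl(\sup_j\|\phi^j\|_{L^2}^2\bigr)^{1/3}\cdot\bigl(\sup_j\|\phi^j\|_{L^2}^2\bigr)^{1/3}.
\]
This forces $\sup_j\|\phi^j\|_{L^2}^2\geq 1$, and because $\sum_j\|\phi^j\|_{L^2}^2\leq 1$ exactly one profile survives, say $\phi^1$, with $\|\phi^1\|_{L^2}=1$ and $\|\omega_n^J\|_{L^2}\to 0$.

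Finally I would trace the chain of inequalities backwards: with only $\phi^1$ nontrivial,
\[
\mathbf{M}_{2,4}\leq\bigl\|[e^{it\Delta^2}]\phi^1\bigr\|_{L^6_{t,x}}\leq \mathbf{M}_{2,4}\|\phi^1\|_{L^2}=\mathbf{M}_{2,4},
\]
so equality holds throughout and $\phi^1$ is the desired extremizer. The main obstacle is carefully verifying the two-level limit $\lim_{J\to\infty}\limsup_n$ against the vanishing remainder term and bookkeeping the $L^2$/$L^6$ symmetry of $[T_n^j]$ in dimension two; everything else is essentially tautological once Theorem \ref{T:LPD} is in hand, which is why this argument is standard and references like \cite{BS2021,Han2015,HS2012,JSS2017,Shao2009EJDE,Stovall2020} can be cited for the omitted bookkeeping.
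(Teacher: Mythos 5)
Your argument is correct and is precisely the standard concentration argument via Theorem \ref{T:LPD} that the paper itself invokes by citing Han \cite[Section 3]{Han2015} and otherwise omits, so you are filling in the same route rather than taking a different one. The only blemish is a harmless exponent slip: from $\sum_j a_j^3\le(\sup_j a_j)^2\sum_j a_j\le(\sup_j a_j)^2$ the sixth root gives $\mathbf{M}_{2,4}(\sup_j a_j)^{1/3}$ rather than your $(\sup_j a_j)^{1/3}\cdot(\sup_j a_j)^{1/3}$, but either power forces $\sup_j\|\phi^j\|_{L^2}^2\ge1$ and the conclusion is unaffected.
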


\section{Differentiability}\label{S:Differentiability}
We investigate the differentiability of extremizers by investigating the corresponding Euler-Lagrange equation. Firstly, for functions $f_i\in L^2(\bR^2)$ with $i\in \{1,\ldots, 6\}$, we introduce a $6$-multilinear form $Q$ as follows:
\[Q(f_1,f_2,f_3,f_4,f_5,f_6):= \int_{\bR^3} \prod_{k=1}^3 \overline{[e^{it\Delta^2}]f_k(x)} \ [e^{it\Delta^2}]f_{k+3}(x) \ddd x\ddd t.\]
A function $f\in L^2(\bR^2)$ is said to be a solution to the associated Euler-Lagrange equation if it satisfies the following equation
\begin{equation} \label{E:Euler-Lagrange}
        \omega \langle f,g \rangle_{L^2} =Q(g, f, f, f, f, f), \quad \forall g\in L^2(\bR^2)
\end{equation}
for some $\omega>0$. Here we have used the notation $\langle f,g\rangle_{L^2}:=\int_{\bR^2} \bar{f}(x) g(x) \ddd x$.  Similar derivations can be found in \cite[page 473]{Evans2010} or \cite[Section 1.2]{HL2012}.  It can be seen that extremizers to \eqref{E:4-Strichartz} are solutions to the associated Euler-Lagrange equation with $\omega=\mathbf{S}^6\|f\|_{L^2}^4$.

Let $\delta$ denote the Dirac delta measure. Note that $\widehat{\delta}\sim 1$ as a distribution. Therefore, we have
\begin{equation} \label{E:6-multilinear sim delta}
	Q(f_1,\ldots, f_6) \sim \int_{\bR^{12}} \overline{\widehat{f}_1}(\tilde{\xi}_1) \overline{\widehat{f}_2}(\tilde{\xi}_2) \overline{\widehat{f}_3}(\tilde{\xi}_3) \widehat{f_4}(\tilde{\xi}_4) \widehat{f_5}(\tilde{\xi}_5) \widehat{f_6}(\tilde{\xi}_6) \delta(a(\tilde{\xi})) \delta(b(\tilde{\xi})) \ddd \tilde{\xi},
\end{equation}
where $\tilde{\xi}_i \in \bR^2$ for $i\in\{1,\ldots, 6\}$ and $\tilde{\xi} :=(\tilde{\xi}_1, ..., \tilde{\xi}_6) \in \left(\bR^2\right)^6$, as well as
\[a(\tilde{\xi}):= \tilde{\xi}_1+\tilde{\xi}_2+\tilde{\xi}_3 - \tilde{\xi}_4 - \tilde{\xi}_5 - \tilde{\xi}_6, \quad b(\tilde{\xi}) :=|\tilde{\xi}_1|^4+|\tilde{\xi}_2|^4 +|\tilde{\xi}_3|^4 -|\tilde{\xi}_4|^4-|\tilde{\xi}_5|^4 - |\tilde{\xi}_6|^4.\]

In order to show differentiability, we follow similar arguments to those found in  \cite{FS2022DPDE,HS2012,JS2016}.  It suffices to prove the following theorem:
\begin{thm} \label{exp f L2}
	If $f$ solves equation \eqref{E:Euler-Lagrange} for some $\omega>0$, then there exists $\mu>0$ such that
	$$e^{\mu|\xi|^4}\widehat{f} \in L^2(\bR^2).$$
	Furthermore, $f$ can be extended to be complex analytic on $\bC^2$.
\end{thm}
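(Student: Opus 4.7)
The proof follows the strategy of Brocchi--Oliveira--Quilodr\'an~\cite{BOQ2020} and Jiang--Shao~\cite{JS2016}, proceeding in three stages.

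\textbf{Stage 1: Pointwise reformulation of the Euler--Lagrange equation.} Combining~\eqref{E:Euler-Lagrange} with the expansion~\eqref{E:6-multilinear sim delta} and arguing by duality in $g\in L^2(\bR^2)$, I would first recast the Euler--Lagrange equation in the equivalent pointwise form
\[
\omega\,\widehat{f}(\xi_1) = C \int_{(\bR^2)^5} \widehat{f}(\xi_2)\widehat{f}(\xi_3)\,\overline{\widehat{f}(\xi_4)\widehat{f}(\xi_5)\widehat{f}(\xi_6)}\,\delta(a(\tilde\xi))\delta(b(\tilde\xi))\,\ddd\tilde\xi_{2,3,4,5,6},
\]
valid for a.e. $\xi_1\in\bR^2$. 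Taking absolute values yields the majorization $\omega|\widehat{f}(\xi_1)|\leq C\int \prod_{j=2}^{6}|\widehat{f}(\xi_j)|\,\delta(a)\delta(b)\,\ddd\tilde\xi$.

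\textbf{Stage 2: Weighted bootstrap.} The key geometric input is that $|\xi_1|^{4}\leq |\xi_4|^4+|\xi_5|^4+|\xi_6|^4$ on $\supp\delta(b)$, so that the multinomial expansion
\[
|\xi_1|^{4N}\leq \sum_{a+b+c=N}\binom{N}{a,b,c}|\xi_4|^{4a}|\xi_5|^{4b}|\xi_6|^{4c}
\]
distributes the weight across three frequency variables. Multiplying the pointwise majorization by $|\xi_1|^{4N}$ and invoking the nonnegative multilinear restriction bound
\[
\int F_1\cdots F_6\,\delta(a)\delta(b)\,\ddd\tilde\xi \leq C \prod_{j=1}^{6}\|F_j\|_{L^2(\bR^2)}
\]
(which follows from~\eqref{E:4-Strichartz} by Plancherel applied to nonnegative Fourier data) gives, after taking the $L^2(\xi_1)$ norm, the recursive inequality
\[
A_N \leq \kappa \sum_{a+b+c=N}\binom{N}{a,b,c}A_a A_b A_c, \qquad A_N := \||\xi|^{4N}\widehat{f}\|_{L^2(\bR^2)}.
\]
Separating the three ``diagonal'' terms (where one of $a,b,c$ equals $N$) and absorbing them into the left-hand side, an induction on $N$ together with Stirling's asymptotics delivers the growth estimate $A_N \leq K\,D^N N!$ for some constants $K,D>0$. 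Summing the resulting series then yields
\[
\|e^{\mu|\xi|^4}\widehat{f}\|_{L^2(\bR^2)} \leq \sum_{N\geq 0}\frac{\mu^N}{N!}A_N \leq K\sum_{N\geq 0}(\mu D)^N <\infty
\]
for any $\mu<D^{-1}$, establishing the first assertion.

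\textbf{Stage 3: Entire extension.} With $e^{\mu|\xi|^4}\widehat{f}\in L^2$ in hand, for $z=x+iy\in\bC^2$ the Fourier inversion $f(z)=(2\pi)^{-2}\int_{\bR^2}e^{iz\cdot\xi}\widehat{f}(\xi)\,\ddd\xi$ converges absolutely: using $|e^{iz\cdot\xi}|\leq e^{|y||\xi|}$, the Young-type inequality $2|y||\xi|\leq \tfrac{\mu}{2}|\xi|^4 + C_{\mu}|y|^{4/3}$, and Cauchy--Schwarz pairing $e^{\mu|\xi|^4/2}\widehat{f}\in L^2$ against $e^{-\mu|\xi|^4/2}e^{|y||\xi|}\in L^2(\bR^2)$, one obtains the bound $|f(z)|\leq C e^{C_{\mu}|y|^{4/3}}$. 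Standard dominated-convergence arguments allow differentiation under the integral sign in each complex coordinate, showing that $f$ is entire on $\bC^2$.

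\textbf{Principal obstacle.} The technical heart of the proof is closing the recursion in Stage 2 with the $N!$-type growth needed to sum the exponential series. Because the multilinear restriction bound is (essentially) saturated by the extremizer, the three diagonal contributions reproduce $A_N$ without slack and must be handled with care---by an appropriate rescaling, a bilinear-restriction refinement exploiting frequency separation in non-diagonal configurations, or careful use of the sign cancellations between $\widehat{f}$ and $\overline{\widehat{f}}$ in the quintic integrand, as in~\cite{BOQ2020,JS2016,FS2022DPDE}.
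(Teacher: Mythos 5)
Your Stage 3 is fine and matches the paper, and your Stage 1 reformulation is harmless, but Stage 2 has a genuine gap exactly at the point you flag as the ``principal obstacle'' --- and flagging it is not the same as closing it. In the recursion $A_N \leq \kappa \sum_{a+b+c=N}\binom{N}{a,b,c}A_aA_bA_c$, the three diagonal terms contribute $3\kappa A_0^2 A_N$, while the left-hand side (after reinstating the Euler--Lagrange constant) is $\omega A_N$ with $\omega=\mathbf{M}_{2,4}^6\|f\|_{L^2}^4$. Since the constant $\kappa$ in the nonnegative multilinear bound is at least of the size $\mathbf{M}_{2,4}^6$ and $A_0\sim\|f\|_{L^2}$, one has $3\kappa A_0^2\geq 3\omega$, so the diagonal can never be absorbed into the left: there is no slack, precisely because an extremizer saturates the estimate. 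Listing possible remedies (rescaling, bilinear refinement, sign cancellations) does not repair this; some concrete mechanism producing a small coefficient is needed, and it is the heart of the matter. A second, independent problem is that the induction quantifies over $A_N=\||\xi|^{4N}\widehat{f}\|_{L^2}$, which is not known to be finite for any $N\geq 1$ a priori; an induction cannot begin from quantities that may be infinite, and truncating in frequency to force finiteness reintroduces the same no-slack diagonal in the limit. (There is also a minor bookkeeping issue: with the ansatz $A_N\leq KD^NN!$ the off-diagonal sum returns $O(N^2)\,D^NN!$, so even the benign part of the recursion needs a modified ansatz.)

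The paper resolves both difficulties differently, and this is where the two arguments genuinely diverge. Instead of polynomial weights, it uses the bounded regularized weight $F_{\mu,\eps}(\xi)=\mu|\xi|^4/(1+\eps|\xi|^4)$, so every weighted norm $H_s(\eps)$ is finite for $\eps>0$ with no a priori moment assumptions; and instead of trying to beat the diagonal by brute force, it truncates to high frequencies $\widehat{f}_>=\widehat{f}\mathbbm{1}_{\{|\xi|\geq s^2\}}$ and gains smallness from two sources: the bilinear estimate (Lemma \ref{bilinear}), which yields a factor $s^{-5/6}$ whenever a factor has frequency $\leq s$ against one $\geq s^2$, and the vanishing tail $\|f_\sim\|_{L^2}\to0$ for the intermediate band $s\leq|\xi|\leq s^2$. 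This makes the linear and quadratic coefficients in the resulting polynomial inequality (Lemma \ref{l:Poly}) of size $o(1)$ as $s\to\infty$, uniformly in $\eps$; a connectivity/continuity argument in $\eps$ with the concave function $G(x)=\tfrac{\omega}{2}x-C(x^2+\cdots+x^5)$ then traps $H_s(\eps)$ below the first root, and Fatou as $\eps\to0$ gives $e^{s^{-8}|\xi|^4}\widehat{f}_>\in L^2$. If you want to salvage your moment-recursion scheme, you would have to import exactly these ingredients (frequency splitting with a bilinear gain, plus a regularization guaranteeing finiteness); as written, the argument does not close.
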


Note that the last part follows directly from the Fourier inversion formula and dominated convergence. This is because if there holds $e^{\mu |\xi|^4}\widehat{f} \in L^2$ for some $\mu>0$, then for any $a\in \bR$ we can rewrite $e^{a|\xi|}\widehat{f}$ as
\begin{equation} \label{E:Extend complex analytic}
	e^{a|\xi|}\widehat{f}=e^{a|\xi|-\mu|\xi|^4}e^{\mu |\xi|^4}\widehat{f},
\end{equation}
and, furthermore, the Cauchy-Schwarz inequality implies $e^{a|\xi|}\widehat{f}(\xi)\in L^1(\bR^2)$. Hence, for any $z\in \bC^2$ we could choose $a>|z|$ and conclude $\partial_{\overline{z}} f(x)=0$.
This result also follows directly from the Paley-Wiener theorem \cite[Theorem IX.13]{RS1975}. In fact, notice that the term $e^{a|\xi|-\mu|\xi|^4}$ in the identity \eqref{E:Extend complex analytic} is clearly bounded. Hence, we obtain $e^{a|\xi|}\widehat{f} \in L^2(\bR^2)$ and then the Paley-Wiener theorem implies the desired property.

Meanwhile, we introduce the weighted $6$-linear form $M_F$ as follows:
\begin{equation} \label{E:WeightMLI}
        M_F(h_1, ..., h_6) := \int e^{F(\tilde{\eta}_1)-\sum_{k=2}^6 F(\tilde{\eta}_k)} \prod_{k=1}^6 |h_k(\tilde{\eta}_k)| \delta(a(\tilde{\eta})) \delta(b(\tilde{\eta})) \ddd \tilde{\eta},
\end{equation}
where $\tilde{\eta}_i \in \bR^2$ for $i\in \{1,\ldots, 6\}$ and $\tilde{\eta}:=(\tilde{\eta}_1, ..., \tilde{\eta}_6) \in \left(\bR^2\right)^6$, as well as
\begin{equation} \label{E:Weight function}
    F(\tilde{\eta}_k)=F_{\mu, \eps }(\tilde{\eta}_k) := \frac{\mu |\tilde{\eta}_k|^4}{1+ \eps |\tilde{\eta}_k|^4}, \qquad \eps \geq 0, \ \mu \geq 0.
\end{equation}

Notice that $F(\tilde{\eta}_k)$ increases with respect to $|\tilde{\eta}|$. Indeed, by considering the function $\frac{\mu x^4}{1+\eps x^4}$, we can see that the derivative of this function is strictly positive for $x>0$. In the support dictated by $\delta(b(\tilde{\eta}))$, there holds
$$|\tilde{\eta}_1|^4+|\tilde{\eta}_2|^4+|\tilde{\eta}_3|^4=|\tilde{\eta}_4|^4+|\tilde{\eta}_5|^4+|\tilde{\eta}_6|^4.$$
Hence we conclude
\[F(\tilde{\eta}_1) \leq \sum_{k=2}^6 F(\tilde{\eta}_k).\]
This immediately gives the inequality
\begin{equation*}
       \left| M_F(h_1, ..., h_6)\right|\leq \int_{\bR^{12}} \prod_{k=1}^6 \left| h(\tilde{\eta}_k)\right| \delta(a(\tilde{\eta})) \delta(b(\tilde{\eta})) \ddd\tilde{\eta}.
\end{equation*}
Thus, the previous estimate \eqref{E:6-multilinear sim delta} directly implies that
\begin{equation}\label{E:M_F increasing}
    \left| M_F(h_1, ..., h_6)\right| \lesssim Q(|h_1|^{\vee},\ldots |h_6|^{\vee}).
\end{equation}
To establish the desired Theorem \ref{exp f L2}, one crucial tool is the following bilinear estimate lemma.
\begin{lemma} \label{bilinear}
	Let $s \gg 1$ and $N \gg 1$. For the functions $f\in L^2(\bR^2)$ and $g\in L^2(\bR^2)$, if their Fourier supports satisfy
	\[\supp{\widehat{f}}\subset \{\xi\in \bR^2: |\xi|\leq s\}, \quad \supp{\widehat{g}}\subset \{\eta\in \bR^2: |\eta|\geq Ns\},\]
	then there holds
	\[\left\|[e^{it\Delta^2}]f \ [e^{it\Delta^2}]g \right\|_{L_{t,x}^3(\bR^3)} \lesssim N^{-5/6} \|f\|_{L_x^2(\bR^2)}\|g\|_{L_x^2(\bR^2)}.\]
\end{lemma}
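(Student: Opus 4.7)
The plan is to derive the estimate by interpolating between a bilinear $L^2$-estimate (which captures the frequency-separation gain) and an $L^4$-bound obtained from \holder inequality and the $L^4$-Strichartz estimate underlying Theorem \ref{T:JSS2017}. Throughout, set $u:=[e^{it\Delta^2}]f$ and $v:=[e^{it\Delta^2}]g$, so the target reads $\|uv\|_{L_{t,x}^3(\bR^3)}\lesssim N^{-5/6}\|f\|_{L^2}\|g\|_{L^2}$.

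First I would prove the bilinear $L^2$-estimate
\[\|uv\|_{L_{t,x}^2(\bR^3)}\lesssim N^{-3/2}s^{-1}\|f\|_{L^2}\|g\|_{L^2}.\]
A short computation identifies the space-time Fourier transform of $uv$ at $(\tau,\zeta)\in\bR\times\bR^2$ (up to multiplicative constants) with
\[\int_{\bR^2} \widehat{f}(\xi)\widehat{g}(\zeta-\xi)\delta\l(\tau-\phi_\zeta(\xi)\r)\ddd\xi,\qquad \phi_\zeta(\xi):=|\xi|^4+|\zeta-\xi|^4,\]
which, by the coarea formula, becomes an integral along the level curve $\Sigma_{\zeta,\tau}:=\{\xi\in\bR^2:\phi_\zeta(\xi)=\tau\}$ weighted by $|\nabla_\xi\phi_\zeta|^{-1}$. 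Two geometric facts then drive the estimate. On the support of $\widehat{f}(\xi)\widehat{g}(\zeta-\xi)$, the separation $|\xi|\leq s\leq N^{-1}|\zeta-\xi|$ forces the gradient lower bound
\[|\nabla_\xi\phi_\zeta(\xi)|=4\bigl||\xi|^2\xi-|\zeta-\xi|^2(\zeta-\xi)\bigr|\gtrsim (Ns)^3.\]
Moreover, since $\phi_\zeta$ is strictly convex with minimum at $\xi=\zeta/2$ and $|\zeta|\gtrsim Ns\gg s$ on this support, the level curves are nearly transverse to $\hat\zeta$ inside $\{|\xi|\leq s\}$, giving the Hausdorff length bound
\[\mathcal{H}^1\bigl(\Sigma_{\zeta,\tau}\cap\{|\xi|\leq s\}\bigr)\lesssim s.\]
Applying Cauchy--Schwarz along the fiber, followed by Plancherel, then delivers the claimed $L^2$ estimate.

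Next, \holder inequality gives $\|uv\|_{L_{t,x}^4}\leq \|u\|_{L_{t,x}^\infty}\|v\|_{L_{t,x}^4}$. Fourier inversion and Cauchy--Schwarz on the frequency disk of radius $s$ produce $\|u\|_{L_{t,x}^\infty}\leq \|\widehat{f}\|_{L^1}\lesssim s\|f\|_{L^2}$. For $v$, the Sobolev--Strichartz estimate $\|[D^{1/2}][e^{it\Delta^2}]h\|_{L_{t,x}^4}\lesssim\|h\|_{L^2}$ underlying Theorem \ref{T:JSS2017}, applied with $h:=[D^{-1/2}]g$ and combined with $\|[D^{-1/2}]g\|_{L^2}\lesssim(Ns)^{-1/2}\|g\|_{L^2}$ (from $|\eta|\geq Ns$ on $\supp\widehat{g}$), gives $\|v\|_{L_{t,x}^4}\lesssim(Ns)^{-1/2}\|g\|_{L^2}$; hence $\|uv\|_{L_{t,x}^4}\lesssim s^{1/2}N^{-1/2}\|f\|_{L^2}\|g\|_{L^2}$.

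Finally, since $L^3=[L^2,L^4]_{2/3}$, \holder inequality gives
\[\|uv\|_{L_{t,x}^3}\leq \|uv\|_{L_{t,x}^2}^{1/3}\|uv\|_{L_{t,x}^4}^{2/3}\lesssim\bigl(N^{-3/2}s^{-1}\bigr)^{1/3}\bigl(s^{1/2}N^{-1/2}\bigr)^{2/3}\|f\|_{L^2}\|g\|_{L^2}=N^{-5/6}\|f\|_{L^2}\|g\|_{L^2},\]
with the $s$-factors cancelling precisely. The main obstacle is the bilinear $L^2$-estimate: both the gradient bound $|\nabla_\xi\phi_\zeta|\gtrsim(Ns)^3$ and the length bound on $\Sigma_{\zeta,\tau}\cap\{|\xi|\leq s\}$ need careful justification from the convexity of $\phi_\zeta$ and the frequency separation hypothesis.
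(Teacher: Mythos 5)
Your proposal is correct, and it reaches the sharp exponent $N^{-5/6}$ by a genuinely different route than the paper. The paper argues directly at the $L^3$/$L^{3/2}$ level following Killip--Visan: after rotating so that $\eta_1-\xi_1\gtrsim Ns$, it changes variables to $(\gamma,\tau,\beta)=(\xi+\eta,\,|\xi|^4+|\eta|^4,\,\xi_2)$, bounds the Jacobian below by $2^{2k}(Ns)^3$ on dyadic shells $2^kNs\leq|\eta|\leq2^{k+1}Ns$, and then runs Minkowski, Hausdorff--Young, H\"older in $\beta$ (using $|\beta|\leq s$), and a dyadic summation to land on $N^{-5/6}$ with the $s$-powers cancelling. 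You instead prove a Bourgain-style bilinear $L^2$ refinement, $\|uv\|_{L^2}\lesssim N^{-3/2}s^{-1}\|f\|_{L^2}\|g\|_{L^2}$, from the gradient lower bound $|\nabla_\xi\phi_\zeta|\gtrsim(Ns)^3$ together with the level-curve length bound $\lesssim s$ inside $\{|\xi|\leq s\}$ (for the latter, the cleanest justification is that $\phi_\zeta$ is convex, so the level set bounds a convex region and its intersection with a disk of radius $s$ has length $O(s)$), and interpolate against $\|uv\|_{L^4}\lesssim s^{1/2}N^{-1/2}\|f\|_{L^2}\|g\|_{L^2}$, obtained from $L^\infty\times L^4$ H\"older, Cauchy--Schwarz on the frequency disk, and the derivative-gain Strichartz estimate $\|[D^{1/2}][e^{it\Delta^2}]h\|_{L_{t,x}^4}\lesssim\|h\|_{L^2}$ --- the same Kenig--Ponce--Vega-type norm already appearing in Theorem \ref{T:JSS2017}, so this is a legitimate imported ingredient. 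The exponent bookkeeping $\bigl(N^{-3/2}s^{-1}\bigr)^{1/3}\bigl(s^{1/2}N^{-1/2}\bigr)^{2/3}=N^{-5/6}$ is exact. What each approach buys: the paper's change-of-variables argument is self-contained (only Hausdorff--Young and H\"older) but requires the dyadic decomposition and Jacobian computation; your argument avoids both at the cost of invoking the $D^{1/2}$--$L^4$ estimate, and notably your bilinear $L^2$ step upgrades the paper's own "second method" via delta measures (which only yields $N^{-1/3}$) to recover the full $N^{-5/6}$ after interpolation. The only places requiring the care you already flag are the gradient and length bounds for the level curves, both of which follow from the stated frequency separation and convexity.
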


\begin{proof}[\textbf{Proof of Lemma \ref{bilinear}}]
We take some inspiration from Killip and Visan \cite[pages 366-367]{KV2013}. Firstly, by breaking the region of integration into several pieces and rotating the coordinate system appropriately, we may restrict our view to where $\eta_1-\xi_1 \gtrsim Ns$. Then consider the following change of variables:
\begin{equation*}
    \gamma=\xi+\eta, \quad \tau=|\xi|^4+|\eta|^4, \quad \beta=\xi_2.
\end{equation*}
A direct computation shows
\begin{align}
       e^{it\Delta^2}f(x) e^{it\Delta^2}g(x) &=\frac{1}{(2\pi)^4}\int_{\bR^4} e^{i(x\cdot \gamma + t \tau)} \ \widehat{f}(\gamma, \tau, \beta) \ \widehat{g}(\gamma, \tau, \beta) \ |\det J| \ddd\gamma \ddd \tau \ddd \beta. \label{E:Bilinear-1}
\end{align}
The absolute value of the determinant of our Jacobian matrix is $4\left|\eta_1|\eta|^2-\xi_1|\xi|^2\right|$.
Recall the previous assumptions $\eta_1-\xi_1 \gtrsim Ns$ and $|\xi|\leq s$. Therefore, in the dyadic region $2^kNs \leq |\eta| \leq 2^{k+1}Ns$, there holds
\begin{equation}\label{E:bilinear-1}
	\l|\det J^{-1} \r|\gtrsim 2^{2k}(Ns)^3.
\end{equation}
Consider the function $G_\beta (\gamma, \tau):=\widehat{f}(\gamma,\tau, \beta)\widehat{g}(\gamma,\tau, \beta)|\det J|$ and recall the inverse space-time Fourier transform
\begin{equation*}
	\widetilde{F}(\xi, \tau):= \int_{\bR^3} e^{i(x\cdot \xi+t\tau)}F(x, t) \ddd x\ddd t.
\end{equation*}
Then we have
\[\widetilde{G}_\beta (x, t)=\int_{\bR^3} e^{i(x\cdot\gamma+ t\tau)} \widehat{f}(\gamma, \tau, \beta) \widehat{g}(\gamma, \tau, \beta) |\det J| \ddd\gamma \ddd\tau.\]
By the identity \eqref{E:Bilinear-1}, it is clear that
\[e^{it\Delta^2}f(x) e^{it\Delta^2}g(x)=\int_{\bR} \widetilde{G}_\beta (x, t) \ddd \beta.\]
Applying Minkowski's inequality followed by Hausdorff-Young's inequality gives
\[\left\| e^{it\Delta^2}f e^{it\Delta^2}g \right\|_{L_{t,x}^3(\bR^3)} \leq \int_{\bR} \|\widetilde{G}_\beta (x, t)\|_{L_{t,x}^3(\bR^3)} \ddd \beta \leq \int_{\bR} \left\|G_\beta (\gamma, \tau) \right\|_{L_{\gamma,\tau}^{3/2} (\bR^3)} \ddd\beta.\]
Notice that $|\beta| \leq s$. Thus, using Hölder's inequality for $p=3$ and $q=3/2$ we conclude 
\begin{equation*}
    \begin{split}
        \left\| e^{it\Delta^2}f e^{it\Delta^2}g \right\|_{L_{t,x}^3(\bR^3)} &\leq \int_{\bR} \left\|G_\beta (\gamma, \tau) \right\|_{L_{\gamma,\tau}^{3/2} (\bR^3)} \ddd\beta \\
        &\leq \left(\int_{|\beta|\leq s} 1 \ddd \beta \right)^{1/3} \left(\int_{\bR^4} |G_\beta(\gamma, \tau)|^{3/2} \ddd\gamma \ddd\tau \ddd\beta \right)^{2/3} \\
        &\leq s^{1/3} \left\|G_\beta(\gamma, \tau)\right\|_{L_{\gamma,\tau,\beta}^{3/2}(\bR^4)}.
    \end{split}
\end{equation*}
Recall that $G=\widehat{f}\widehat{g}|\det J|$. Then changing the variables back to $\xi$ and $\eta$ implies
\[\left\| e^{it\Delta^2}f e^{it\Delta^2}g \right\|_{L_{t,x}^3(\bR^3)} \leq s^{1/3}\left(\int_{\bR^4} \left|\widehat{f}(\xi)\right|^{3/2}\left|\widehat{g}(\eta)\right|^{3/2}|J|^{1/2} \ddd\xi\ddd\eta \right)^{2/3}.\]
Since $\frac{2}{3}<1$, by the dyadic decomposition we can deduce the following estimates:
\begin{align*}
        \left\|e^{it\Delta^2}f e^{it\Delta^2}g \right\|_{L_{t,x}^3(\bR^3)} &\leq s^{1/3} \left(\sum_{k=0}^\infty \int_{|\xi|\leq s} \int_{2^kNs\leq |\eta|\leq 2^{k+1}Ns}\left|\widehat{f}(\xi)\right|^{3/2}\left|\widehat{g}(\eta)\right|^{3/2}|J|^{1/2} \ddd\eta\ddd\xi\right)^{2/3} \\
        &\leq s^{1/3}\sum_{k=0}^\infty \left( \int_{|\xi|\leq s} \int_{2^kNs\leq |\eta|\leq 2^{k+1}Ns}\left|\widehat{f}(\xi)\right|^{3/2}\left|\widehat{g}(\eta)\right|^{3/2}|J|^{1/2} \ddd\eta\ddd\xi\right)^{2/3}.
\end{align*}
Then \eqref{E:bilinear-1} further gives
\begin{align}
        \left\|e^{it\Delta^2}f e^{it\Delta^2}g \right\|_{L_{t,x}^3(\bR^3)} &\lesssim s^{\frac{1}{3}} \sum_{k=0}^\infty \left( \int_{|\xi|\leq s} \int_{2^kNs\leq |\eta|\leq 2^{k+1}Ns} \left|\widehat{f}(\xi)\right|^{\frac{3}{2}} \left| \widehat{g}(\eta)\right|^{\frac{3}{2}} 2^{-k}(Ns)^{-\frac{3}{2}} \ddd\eta\ddd\xi \right)^{\frac{2}{3}} \notag\\
        &=N^{-1}s^{-\frac{2}{3}} \sum_{k=0}^\infty 2^{-\frac{2k}{3}} \left( \int_{|\xi|\leq s} \int_{2^k Ns\leq |\eta|\leq 2^{k+1} Ns} \left|\widehat{f}(\xi)\right|^{\frac{3}{2}} \left|\widehat{g}(\eta)\right|^{\frac{3}{2}} \ddd\eta\ddd\xi \right)^{\frac{2}{3}} \notag\\
        &=N^{-1}s^{-\frac{2}{3}}\left(\int_{|\xi|\leq s}|\widehat{f}(\xi)|^{\frac{3}{2}} \ddd\xi \right)^{\frac{2}{3}} \sum_{k=0}^\infty 2^{\frac{-2k}{3}} \left(\int_{2^kNs\leq |\eta|\leq 2^{k+1}Ns} \!|\widehat{g}(\eta)|^{\frac{3}{2}} \ddd \eta \right)^{\frac{2}{3}}\!\!. \label{E:bilinear-2}
\end{align}
Using Hölder's inequality for $p=4/3$ and $q=4$, we conclude that
\begin{equation*}
        \int_{|\xi|\leq s} |\widehat{f}(\xi)|^{3/2} \ddd\xi \leq \left(\int_{|\xi|\leq s}1^4 \ddd\xi \right)^{1/4} \left(\int_{|\xi|\leq s}|\widehat{f}|^2 \ddd\xi \right)^{3/4} \lesssim s^{1/4}\|f\|_{L^2}^{3/2}.
\end{equation*}
Similar logic also gives us that 
$$\int_{2^kNs\leq |\eta|\leq 2^{k+1}Ns} |\widehat{g}(\eta)|^{3/2} \ddd\eta \lesssim 2^{k/4}(Ns)^{1/4}\|g\|_{L^2}^{3/2}.$$
Therefore, inserting these two estimates into \eqref{E:bilinear-2}, we can deduce
\begin{align*}
        \left\| e^{it\Delta^2}f e^{it\Delta^2}g \right\|_{L_{t,x}^3(\bR^3)}&\lesssim N^{-1}s^{-2/3}s^{1/6} \|f\|_{L^2} \sum_{k=0}^\infty 2^{-2k/3}2^{k/6}(Ns)^{1/6} \|g\|_{L^2} \\
        &\lesssim N^{-5/6} \|f\|_{L^2} \|g\|_{L^2},
\end{align*}
where in the last inequality we have used the condition $s\gg1$. And now the proof is completed.
\end{proof}

We note that this Lemma \ref{bilinear} can be seen in another light. One can follow Bourgain's argument in \cite[Lemma 111]{Bourgain1998} to establish a similar bilinear estimate. Here we omit the details for simplicity.

Using estimate \eqref{E:M_F increasing} and then using the $L_x^2 \rightarrow L_{t,x}^6$ Strichartz inequality \eqref{E:4-Strichartz}, as well as Hölder's inequality, we can directly deduce the following multilinear estimate Lemma \ref{multilinear} based on the aforementioned bilinear estimate Lemma \ref{bilinear}. Similar arguments can also be found in previous works such as \cite[estimate (5-5)]{BOQ2020} and \cite[Corollary 4.7]{HS2012}. For simplicity, the detailed proof is omitted here. 

\begin{lemma} \label{multilinear}
	Set $s\gg 1$ and $N\gg 1$. Assume that $h_1$ and $h_2$ have Fourier supports satisfying
	\[\supp{\widehat{h_1}}\subset \{\xi: |\xi|\leq s\}, \quad \supp{\widehat{h_2}} \subset \{\eta: |\eta|\geq Ns\}.\]
	If the functions $h_i \in L^2(\bR^2)$ for all $i\in\{1,2\ldots,6\}$, then
	\[M_F(h_1, ..., h_6) \lesssim N^{-5/6} \prod_{k=1}^6 \|h_k\|_{L^2}.\]
\end{lemma}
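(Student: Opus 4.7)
The plan is to combine three ingredients that are already available: the pointwise bound \eqref{E:M_F increasing} which relates $M_F$ to the multilinear form $Q$, the bilinear Strichartz-type estimate Lemma \ref{bilinear}, and the basic Strichartz inequality \eqref{E:4-Strichartz}. No new analysis should be required; the estimate is essentially a Hölder-plus-bilinear argument.

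First I would use \eqref{E:M_F increasing} to pass from $M_F(h_1,\ldots,h_6)$ to $Q(g_1,\ldots,g_6)$, where $g_k:=|h_k|^{\vee}$. By Plancherel, $\|g_k\|_{L^2}=\|h_k\|_{L^2}$, and the Fourier supports of $g_1$ and $g_2$ are the supports of $|h_1|$ and $|h_2|$, hence remain contained in $\{|\xi|\le s\}$ and $\{|\eta|\ge Ns\}$ respectively. Thus the pair $(g_1,g_2)$ is exactly the type of pair to which Lemma \ref{bilinear} applies.

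Next, unfolding the definition of $Q$, I would apply Hölder's inequality in $L^{3}_{t,x}\cdot L^{6}_{t,x}\cdot L^{6}_{t,x}\cdot L^{6}_{t,x}\cdot L^{6}_{t,x}$ (which is valid since $\tfrac{1}{3}+4\cdot\tfrac{1}{6}=1$), pairing the two propagators with separated Fourier supports into the $L^3$ factor:
\[
|Q(g_1,\ldots,g_6)|\le \bigl\|[e^{it\Delta^2}]g_1\cdot [e^{it\Delta^2}]g_2\bigr\|_{L^{3}_{t,x}} \prod_{k=3}^{6} \bigl\|[e^{it\Delta^2}]g_k\bigr\|_{L^{6}_{t,x}}.
\]
The first factor is bounded by $N^{-5/6}\|g_1\|_{L^2}\|g_2\|_{L^2}$ by Lemma \ref{bilinear}, and each of the remaining four factors is bounded by $\|g_k\|_{L^2}$ via \eqref{E:4-Strichartz}. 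Multiplying these together and recalling $\|g_k\|_{L^2}=\|h_k\|_{L^2}$ yields the claimed bound $N^{-5/6}\prod_{k=1}^{6}\|h_k\|_{L^2}$.

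There is no real obstacle here; the only matters requiring a pinch of care are: (i) that the Fourier supports of $|h_1|^{\vee}$ and $|h_2|^{\vee}$ are indeed what Lemma \ref{bilinear} requires (which is immediate from $\widehat{|h_k|^{\vee}}=|h_k|$), and (ii) that the exponents in Hölder's inequality balance correctly across the six propagators. Both are essentially bookkeeping, which is presumably why the authors omit the computation.
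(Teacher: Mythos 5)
Your argument is correct and coincides with the route the paper indicates (and omits for brevity): pass from $M_F$ to $Q(|h_1|^{\vee},\ldots,|h_6|^{\vee})$ via \eqref{E:M_F increasing}, apply Hölder with exponents $\tfrac13+4\cdot\tfrac16=1$ pairing the two frequency-separated propagators, then use Lemma \ref{bilinear} on that pair and the Strichartz inequality \eqref{E:4-Strichartz} on the remaining four factors, concluding by Plancherel. No gaps; your support bookkeeping for $|h_k|^{\vee}$ is exactly the point that makes the bilinear lemma applicable.
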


We now provide the following Lemma \ref{l:Poly} to help establish our Theorem \ref{exp f L2}. The proof of this lemma is nearly identical to the proof of Lemma $3.3$ in \cite{FS2022DPDE}, and takes inspiration from the proof of Lemma $2.2$ in \cite{JS2016}, as well as Proposition $5.2$ in \cite{BOQ2020}. These are slightly different, given the order of the problem as well as the dimension. The only substantial difference between the proof for this and the proof in \cite{FS2022DPDE} is for our proof we let $\mu=s^{-8}$, and in \cite{FS2022DPDE} it is $\mu=s^{-4}$. We will provide the details below.

\begin{lemma} \label{l:Poly}
	Assume $f$ solves the generalized Euler-Lagrange equation \eqref{E:Euler-Lagrange} and $\|f\|_{L^2(\bR^2)}=1$.  Furthermore, define the notation $\widehat{f}_>:=\widehat{f}\mathds{1}_{|\xi|\geq s^2}$ for $s>0$ and recall the function $F=F_{\mu,\eps}$ defined in \eqref{E:Weight function}. Then there exists some $s \gg 1$ such that for $\mu=s^{-8}$, we have
	\begin{equation*}
    	\begin{split}
        	\omega \left\|e^{F(\cdot)}\widehat{f}_>\right\|_{L^2}&\leq o_1(1)\left\|e^{F(\cdot)}\widehat{f}_>\right\|_{L^2}+C\left\|e^{F(\cdot)}\widehat{f}_>\right\|_{L^2}^2 +C\left\|e^{F(\cdot)}\widehat{f}_> \right\|_{L^2}^3 \\
        	&+C\left\|e^{F(\cdot)}\widehat{f}_>\right\|_{L^2}^4 +C\left\|e^{F(\cdot)}\widehat{f}_>\right\|_{L^2}^5+o_2(1),
    	\end{split}
	\end{equation*}
	where $o_k(1)$ means that the constant $o_k(1)\to 0$ as $s\to \infty$ uniformly for all $\eps>0$ and $k=1,2$.  Likewise, the constant $C$ is independent of $\eps$ and $s$.
\end{lemma}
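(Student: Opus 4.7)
The plan is to test the Euler--Lagrange equation \eqref{E:Euler-Lagrange} against a specific function $g\in L^2(\bR^2)$, reduce the resulting six-multilinear form to the weighted form $M_F$ via a cancellation on $\supp\delta(b)$, and then estimate each piece by a low/high frequency split combined with Lemma~\ref{multilinear}. Concretely, set $a:=\|e^{F(\cdot)}\widehat f_>\|_{L^2}$ (assume $a>0$) and take $g$ with $\widehat g:=e^{2F}\widehat f_>$. Plancherel gives $\langle f,g\rangle_{L^2}\simeq a^2$, so the left-hand side of \eqref{E:Euler-Lagrange} equals $\omega a^2$ up to an absolute constant. For the right-hand side, substitute $\widehat g$ into the delta representation \eqref{E:6-multilinear sim delta} and factor
\[
    e^{2F(\tilde\xi_1)}\;=\;\exp\!\Bigl(F(\tilde\xi_1)-\sum_{k=2}^{6}F(\tilde\xi_k)\Bigr)\cdot e^{F(\tilde\xi_1)}\prod_{k=2}^{6}e^{F(\tilde\xi_k)}.
\]
Since $|\tilde\xi_1|^4\le\sum_{k\ge 2}|\tilde\xi_k|^4$ on $\supp\delta(b)$ and $F$ is increasing in $|\cdot|$, the bracketed factor is at most $1$, hence $|Q(g,f,f,f,f,f)|\lesssim M_F(e^{F}\widehat f_>,e^{F}\widehat f,\ldots,e^{F}\widehat f)$.

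Next, I would decompose $\widehat f=\widehat f_<+\widehat f_>$ with $\widehat f_<:=\widehat f\,\mathds{1}_{|\xi|<s^2}$ and expand $M_F$ multilinearly into $2^5=32$ subterms indexed by $(\sigma_2,\ldots,\sigma_6)\in\{<,>\}^5$. The choice $\mu=s^{-8}$ forces $F\le 1$ on $\{|\xi|\le s^2\}$ uniformly in $\eps\ge 0$, so $\|e^{F}\widehat f_<\|_{L^2}\lesssim\|\widehat f\|_{L^2}\lesssim 1$. The unique ``all-high'' subterm is controlled by \eqref{E:M_F increasing} together with the Strichartz inequality \eqref{E:4-Strichartz}, contributing $a^6$, which after dividing the final inequality by $a$ becomes the $Ca^5$ term. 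For each remaining subterm (at least one $\sigma_k=<$), further split the first argument $\widehat f_>=\widehat f_{(0)}+\widehat f_{\gg}$, where $\widehat f_{(0)}:=\widehat f\,\mathds{1}_{s^2\le|\xi|\le K s^2}$ and $\widehat f_{\gg}:=\widehat f\,\mathds{1}_{|\xi|\ge K s^2}$ for a parameter $K\gg 1$. On the $\widehat f_{\gg}$-branch, pair the first argument with one of the low-frequency entries via Lemma~\ref{multilinear} (applied with $s\leftarrow s^2$, $N=K$) to gain $K^{-5/6}$; on the annular branch, use $\|e^{F}\widehat f_{(0)}\|_{L^2}\le e^{K^{4}}\|\widehat f\,\mathds{1}_{s^2\le|\xi|\le K s^2}\|_{L^2}$, which tends to $0$ as $s\to\infty$ for any fixed $K$ by dominated convergence on $\widehat f\in L^2$. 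Note also that when all $\sigma_k=<$, the constraint $\tilde\xi_1=\sum_{k=4}^{6}\tilde\xi_k-\sum_{k=2}^{3}\tilde\xi_k$ encoded by $\delta(a)$ forces $|\tilde\xi_1|\le 5s^2$, so for $K>5$ the far piece does not contribute to this configuration, which is crucial for producing an $o_s(1)$ (rather than $O(1)$) bound on the ``all-low'' term.

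Collecting the $\binom{5}{j}$ configurations with $j$ high-frequency entries among $\sigma_2,\ldots,\sigma_6$ yields
\[
    \omega a^2\;\lesssim\;a^6+K^{-5/6}\sum_{j=0}^{4}\binom{5}{j}a^{1+j}+o_s(1)\sum_{j=0}^{4}\binom{5}{j}a^{j},
\]
where $o_s(1)$ denotes a quantity tending to zero as $s\to\infty$ uniformly in $\eps\ge 0$. Choose $K=K(s)\to\infty$ slowly enough that $e^{K(s)^{4}}\|\widehat f\,\mathds{1}_{|\xi|\ge s^2}\|_{L^2}\to 0$ (possible since $\|\widehat f\,\mathds{1}_{|\xi|\ge s^2}\|_{L^2}\to 0$ with no rate constraint); then $K^{-5/6}$ and the annular remainder are simultaneously $o_s(1)$. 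Dividing by $a$ and reorganizing then produces the desired inequality: the $Ca^k$ terms ($k=2,3,4,5$) come from the $K^{-5/6}$ branch (after absorbing the slowly decaying $K^{-5/6}$ into the constants $C$), the $o_1(1)a$ coefficient arises from the linear-in-$a$ contributions of the $j=1$ far branch and the $j=2$ annular branch, and the $o_2(1)$ constant absorbs the $j=0,1$ annular contributions. The main technical obstacle is preserving uniformity in $\eps$ throughout this three-parameter $(\eps,K,s)$ bookkeeping: uniformity is automatic because $F_{\mu,\eps}\le F_{\mu,0}=\mu|\xi|^4$ for every $\eps\ge 0$, the cutoffs $s^2$ and $K(s)s^2$ are $\eps$-independent, and the $L^2$-tightness used in the annular bound does not involve $\eps$.
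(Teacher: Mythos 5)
Your reduction of the Euler--Lagrange equation \eqref{E:Euler-Lagrange} to the weighted form $M_F$, the normalization $\mu=s^{-8}$, and the use of Lemma~\ref{multilinear} to gain a negative power of the frequency-separation parameter all match the paper's strategy, and those parts are sound. However, there is a genuine gap in your treatment of the ``all-low'' configuration ($j=0$, i.e.\ $M_F(h_>,h_<,\dots,h_<)$ with $h=e^{F}\widehat f$), and it is exactly the term for which the lemma is delicate. You extract the smallness from the \emph{first} slot, splitting $\widehat f_>$ into an annulus $s^2\le|\xi|\le Ks^2$ and a far piece; the far piece indeed vanishes here by the $\delta(a(\tilde\xi))$ support constraint, but the surviving annular bound is $e^{K^4}\bigl\|\widehat f\,\mathbbm{1}_{\{|\xi|\ge s^2\}}\bigr\|_{L^2}\cdot O(1)=o_s(1)$ \emph{without any factor of} $a:=\bigl\|e^{F}\widehat f_>\bigr\|_{L^2}$. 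Your starting identity controls $\omega a^2$, so to reach the stated inequality for $\omega a$ you must divide by $a$, and this $j=0$ contribution becomes $o_s(1)/a$, which is not of the form $o_2(1)$: the quantity $a$ is not bounded below (for fixed $\eps>0$ one has $a\le e^{\mu/\eps}\|\widehat f\,\mathbbm{1}_{\{|\xi|\ge s^2\}}\|_{L^2}\to 0$ as $s\to\infty$, and it can be arbitrarily small). The alternative crude bound for this term, $\lesssim \|h_>\|_{L^2}\|h_<\|_{L^2}^5\lesssim Ca$, is also inadmissible, since the statement requires the coefficient of the linear term to be $o_1(1)$, not a fixed constant; interpolating between the two bounds only produces powers like $a^{1/2}$, which again do not fit the required polynomial shape. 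So the claim that ``the $o_2(1)$ constant absorbs the $j=0$ annular contribution'' is where the argument breaks.

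The fix is the paper's device, which your decomposition omits: keep $h_>$ intact in the first slot (so that every term retains the factor $a$) and extract the smallness from the \emph{other} five entries by splitting at the intermediate scale $s$, writing $h_<=h_\ll+h_\sim$ with $h_\ll=h\,\mathbbm{1}_{\{|\xi|<s\}}$ and $h_\sim=h\,\mathbbm{1}_{\{s\le|\xi|\le s^2\}}$. For the $h_\ll$ piece, Lemma~\ref{multilinear} applied to the pair $(h_\ll,h_>)$ (supports $|\xi|<s$ and $|\xi|\ge s^2$, i.e.\ separation $N=s$) gives a factor $s^{-5/6}$, while for the $h_\sim$ piece one uses the plain Strichartz/H\"older bound \eqref{E:M_F increasing} together with $\|f_\sim\|_{L^2}\to 0$ as $s\to\infty$; with $\mu=s^{-8}$ all exponential weights on $\{|\xi|\le s^2\}$ stay bounded, so the all-low term is $\le o_1(1)\,a$ uniformly in $\eps$, which is what survives division by $a$ as the $o_2(1)$ constant in the statement. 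With this replacement for the $j=0$ (and, as in the paper, $j=1$) configurations, the rest of your bookkeeping for the higher configurations goes through; your extra parameter $K(s)$ is then unnecessary.
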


\begin{proof}[\textbf{Proof of Lemma \ref{l:Poly}}]
We start by noting that
\[\left\|e^{F(\cdot)}\widehat{f}_>\right\|_{L^2}^2 =\l\langle e^{2F(\cdot)}\widehat{f}_>, \widehat{f}_> \r\rangle_{L^2} =\l\langle e^{2F(\cdot)}\widehat{f}_>, \widehat{f}\r\rangle_{L^2} =\l\langle [e^{2F}]f_>, f\r\rangle_{L^2},\]
where $[e^{2F}]f_>:=\mathscr{F}^{-1} e^{2F(\xi)} \mathscr{F} f_>$. Since $f$ solves the equation \eqref{E:Euler-Lagrange}, we have
\[\omega  \left\|e^{F(\cdot)}\widehat{f}_> \right\|_{L^2}^2 =Q\l([e^{2F}]f_>, f, f, f, f, f\r).\]
Therefore, if we define the following functions on Fourier space
\[h(\tilde{\xi}_i) :=e^{F(\tilde{\xi}_i)}\widehat{f}(\tilde{\xi}_i), \quad h_>(\tilde{\xi}_i) :=e^{F(\tilde{\xi}_i)}\widehat{f}_>(\tilde{\xi}_i)\]
for $i\in \{1,\ldots, 6\}$ and $\tilde{\xi}=(\tilde{\xi}_1, ..., \tilde{\xi}_6) \in \left(\bR^2\right)^6$, then \eqref{E:6-multilinear sim delta} implies that
\begin{align*}
\omega \left\|e^{F(\cdot)}\widehat{f}_>\right\|_{L^2}^2 &\sim \int_{\bR^{12}} e^{F(\tilde{\xi}_1) -\sum_{k=2}^6F(\tilde{\xi}_k)} h_>(\tilde{\xi}_1) \prod_{i=2}^6 h(\tilde{\xi}_i) \delta(a(\tilde{\xi})) \delta(b(\tilde{\xi})) \ddd\tilde{\xi} \\
&=M_F(h_>, h, h, h, h, h).
\end{align*}
To establish our desired result, it suffices to bound $M_F(h_>, h, h, h, h, h)$. Now let us introduce the notations
\[h_\ll :=h\mathbbm{1}_{\{|\xi|<s\}}, \quad h_\sim :=h\mathbbm{1}_{\{s\leq |\xi|\leq s^2\}}.\]
Then there hold $h=h_>+h_<$ and $h_<=h_\ll + h_\sim$.  Hence we can write
\begin{equation*}
    \begin{split}
        M_F(h_>, h, h, h, h, h)&=M_F(h_>, h_<, ..., h_<)+\sum_{j_2, ..., j_6} M_F(h_>, h_{j_2}, ..., h_{j_6}) \\
        &=:A+B,
    \end{split}
\end{equation*}
where $j_i$ is either $``<"$ or $``>"$, and there is at least one $``<"$ in the set $\{j_2, ..., j_6\}$.  

We first deal with the term $A$ by using Lemma \ref{multilinear}. Note that we can further decompose $A$ as
\begin{equation*}
    \begin{split}
        A&=M_F(h_>, h_\ll, h_<, ..., h_<)+M_F(h_>, h_\sim, h_<, ..., h_<) \\
        &=:A_1+A_2.
    \end{split}
\end{equation*}
Hence, for the term $A_1$, Lemma \ref{multilinear} gives the following estimate
\[A_1\lesssim s^{-5/6} \left\|h_>\right\|_{L^2}\left\|h_\ll\right\|_{L^2} \left\|h_<\right\|_{L^2}^4.\]
Furthermore, for $\|h_<\|_{L^2}$, we have
\begin{equation*}
        \left\|h_<\right\|_{L^2} =\l(\int_{\bR^2} e^{\frac{2\mu |\xi|^4}{1+\eps |\xi|^4}} \left|\widehat{f}(\xi)\right|^2 \mathbbm{1}_{\{|\xi|<s^2\}} \ddd\xi\r)^{1/2} \leq e^{\mu s^8}\left\|f\right\|_{L^2} =e^{\mu s^8}.
\end{equation*}
Following similar logic gives us that
\[\left\|h_\ll \right\|_{L^2} \leq e^{\mu s^4}, \quad \left\|h_\sim \right\|_{L^2} \leq e^{\mu s^8} \left\|f_\sim \right\|_{L^2},\]
where $\widehat{f}_\sim :=\widehat{f}\mathbbm{1}_{s\leq |\xi|\leq s^2}$. For the term $A_2$, based on \eqref{E:M_F increasing} and the Strichartz inequality \eqref{E:4-Strichartz}, we can directly use \holder inequality to obtain
\[A_2\lesssim \left\|h_>\right\|_{L^2} \left\|h_\sim \right\|_{L^2} \left\|h_<\right\|_{L^2}^4.\]
Note that there holds $\left\|f_\sim \right\|_{L^2} \to 0$ as $s \to\infty$. Hence by combining all the aforementioned estimates and letting $\mu=s^{-8}$, we can deduce
\begin{equation*}
    \begin{split}
        A &\lesssim s^{-5/6} \left\|h_>\right\|_{L^2} \left\|h_\ll\right\|_{L^2} \left\|h_<\right\|_{L^2}^4 +\left\|h_>\right\|_{L^2} \left\|h_\sim \right\|_{L^2} \left\|h_<\right\|_{L^2}^4 \\
        &\leq s^{-5/6} \left\|h_>\right\|_{L^2} e^{\mu s^4} e^{4\mu s^8}+\left\|h_>\right\|_{L^2} e^{\mu s^8} \left\|f_\sim \right\|_{L^2} e^{4\mu s^8} \\
        &=e^{5}\left\|h_>\right\|_{L^2} \left(s^{-5/6}e^{s^{-4}-1}+ \left\|f_\sim\right\|_{L^2}\right) \\
        &=o_1(1) \left\|e^{F(\cdot)} \widehat{f}_> \right\|_{L^2}.
    \end{split}
\end{equation*}
Here the constant $o_1(1)$ is obviously independent of $\eps$, and converges to $0$ as $s$ gets large.  

Now let us turn to the term $B$. We break up $B$ based on how many subscripts are $``>"$. In other words, for $k\in \{1,\ldots,5\}$, we define
\[B_k:=\sum_{j_2, ..., j_6} M_F(h_>, h_{j_2}, ..., h_{j_6}),\]
where precisely $k$ terms in each set $\{h_{j_2}, ..., h_{j_6}\}$ are $h_>$.  
Indeed, we can rearrange each item so that there holds $B_1=5 M_F(h_>, h_>, h_<, h_<, h_<, h_<)$.
Following a similar argument that we made for $A$, we can bound $B_1$ by
\[B_1 \lesssim o_2(1) \left\|e^{F(\cdot)} \widehat{f}_>\right\|_{L^2}^2,\]
where $o_2(1)$ approaches $0$ as $s$ gets large. Furthermore, for $k=\{2,\ldots,5\}$, we can do the same argument to get
\[B_k \lesssim \left\|e^{F(\cdot)}\widehat{f}_>\right\|_{L^2}^{k+1}.\]
Hence we finally obtain the following estimate
\begin{align*}
        \omega \left\| e^{F(\cdot)}\widehat{f}_> \right\|_{L^2}^2 & \leq o_1(1) \left\| e^{F(\cdot)} \widehat{f}_> \right\|_{L^2} +o_2(1)\left\| e^{F(\cdot)}\widehat{f}_> \right\|_{L^2}^2 +C\left\| e^{F(\cdot)}\widehat{f}_> \right\|_{L^2}^3 \\
        &+C\left\| e^{F(\cdot)}\widehat{f}_> \right\|_{L^2}^4
        +C\left\| e^{F(\cdot)}\widehat{f}_> \right\|_{L^2}^5 +C\left\| e^{F(\cdot)}\widehat{f}_> \right\|_{L^2}^6.
\end{align*}
Dividing both sides by $\left\|e^{F(\cdot)}\widehat{f}_>\right\|_{L^2}$, we get our desired result and the proof is completed.
\end{proof}

We now show that $e^{\mu|\xi|^4}\widehat{f} \in L^2(\bR^2)$ for some $\mu>0$ if the function $f$ is an extremizer to \eqref{E:4-Strichartz}. The same logic that is applied in \cite[Theorem 1.5]{BOQ2020} and \cite[Theorem 1.1]{HS2012} works here, and we provide it here for completeness.

\begin{proof}[\textbf{Proof of Theorem \ref{exp f L2}}]
Without loss of generality we may assume 
$\|f\|_{L^2}=1$ at the beginning. We start by defining 
\[\widehat{f}_{>}:=\widehat{f} \mathbbm{1}_{\{|\xi|\geq s^2\}}, \quad f_{<}:=f-f_{>},\quad \mu:=s^{-8}, \quad H_s(\eps):= \left( \int_{\{|\xi|\geq s^2\}} \left| e^{F_{\mu, \eps }(\xi)}\widehat{f} \right|^2 \ddd \xi \right)^{1/2}.\]
Then on the interval $(0, \infty)$, dominated convergence theorem implies that the function $H_s(\eps)$ is continuous for every fixed $s$, and thus its image is connected since $(0,\infty)$ is connected. Also recall that $f$ solves the Euler-Lagrange equation \eqref{E:Euler-Lagrange} with the constant $\omega=\mathbf{S}^6$. Based on Lemma \ref{l:Poly}, we consider the following function:
\[G(x)=\frac{\mathbf{S}^6}{2}x-C(x^2+x^3+x^4+x^5), \quad x\in [0,\infty).\]
Here the constant $C$ is the same as the corresponding constant in Lemma \ref{l:Poly}, and we define the constant $M:=\sup_{[0,\infty)}G(x)$. Therefore, by Lemma \ref{l:Poly}, we could choose $s$ large enough such that 
\[o_1(1)< \mathbf{S}^6/2,\quad G(H_s(\varepsilon))\leq o_2(1) <M/2\]
hold uniformly for all $\varepsilon>0$. Note that there holds
\[G(0)=0,\quad G'(0)>0, \quad G''(x)<0 \;\; \text{for} \;\; x\in(0,\infty).\]
Hence, we conclude that the equation $G(x)=\frac{M}{2}$ has two different roots on the interval $(0,\infty)$. Denote these two roots $x_0$ and $x_1$ with $0<x_0<x_1$.

In view of the composed function $G(H_s(\varepsilon))$, due to the aforementioned connectivity of $H_s(\varepsilon)$, we know that $G^{-1}([0, M/2])$ must be contained in either $[0, x_0]$ or $[x_1, \infty)$ for every fixed $s$.  However, letting $s$ be sufficiently large and then taking $\eps=1$ would lead to $H_s(1)<x_0$.  Hence, in our situation, there holds
\[G^{-1}([0, M/2])\subset [0, x_0], \quad \text{for} \;\; s\gg1.\]
This fact means that $H_s(\eps)$ is uniformly bounded on $(0, \infty)$ for $s$ large enough. Finally, by using Fatou's lemma or the monotone convergence theorem, we get that $H_s(0)$ is bounded for $s$ large enough. In other words, there holds
\[e^{s^{-8} |\xi|^4}\widehat{f}_{>} \in L^2(\bR^2), \quad \text{for} \;\; s\gg 1.\]
On the other hand, the function $e^{s^{-8}|\xi|^4}\widehat{f}_{<}$ obviously belongs to $L^2(\bR^2)$ for every fixed $s$. Therefore, we obtain our desired result and the proof is finished.
\end{proof}

\bigskip

\bigskip\bigskip\bigskip
\begin{flushleft}
    \textsc{Boning Di\\
    School of Mathematical Sciences \\
    University of Chinese Academy of Sciences \\
    Beijing, 100049 \\ P.R.China} \\
    \textsc{Academy of Mathematics and Systems Science\\ Chinese Academy of Sciences, Beijing, 100190 \\ P.R.China} \\
    E-mail: \textsf{diboning18@mails.ucas.ac.cn}
\end{flushleft}
\begin{flushleft}
    \textsc{Ryan Frier\\
    School of Mathematical and Statistical Sciences \\
    Arizona State University \\
    Tempe, AZ 85281 \\ USA} \\
    E-mail: \textsf{rfrier@asu.edu}
\end{flushleft}

\end{document}